\newtheorem{theorem}{Theorem}
\newtheorem{lemma}{Lemma}
\newenvironment{proof}{\medskip\noindent{\bf Proof.}}{\hfill$\Box$\vspace*{1mm}\medskip}
\renewcommand{\phi}{\varphi}
\renewcommand{\P}{\mathbb{P}}
\newcommand{\E}{\mathbb{E}}
\newcommand{\N}{\mathbb{N}}
\newcommand{\R}{\mathbb{R}}
\newcommand{\Q}{\mathbb{Q}}
\def\ds1{\mathds{1}}
\renewcommand{\epsilon}{\varepsilon}
\renewcommand{\tilde}{\widetilde}
\newlength{\minipagewidth}
\newcommand{\beq}{\begin{equation}}
\newcommand{\eeq}{\end{equation}}
\newcommand{\beqa}{\begin{eqnarray}}
\newcommand{\eeqa}{\end{eqnarray}}
\newcommand{\beqan}{\begin{eqnarray*}}
\newcommand{\eeqan}{\end{eqnarray*}}
\def\ba#1\ea{\begin{align*}#1\end{align*}} 
\def\banum#1\eanum{\begin{align}#1\end{align}} 
\begin{document}
\title{How to trap a gradient flow}
\author{S\'ebastien Bubeck \\
Microsoft Research
\and Dan Mikulincer \thanks{This work was done while D. Mikulincer was an intern at Microsoft Research. Supported by an Azrieli foundation fellowship.} \\
Weizmann Institute}
\maketitle

\begin{abstract}
We consider the problem of finding an $\epsilon$-approximate stationary point of a smooth function on a compact domain of $\R^d$. In contrast with dimension-free approaches such as gradient descent, we focus here on the case where $d$ is finite, and potentially small. This viewpoint was explored in 1993 by Vavasis, who proposed an algorithm which, for {\em any fixed finite dimension $d$}, improves upon the $O(1/\epsilon^2)$ oracle complexity of gradient descent. For example for $d=2$, Vavasis' approach obtains the complexity $O(1/\epsilon)$. Moreover for $d=2$ he also proved a lower bound of $\Omega(1/\sqrt{\epsilon})$ for deterministic algorithms (we extend this result to randomized algorithms).  

Our main contribution is an algorithm, which we call {\em gradient flow trapping} (GFT), and the analysis of its oracle complexity. In dimension $d=2$, GFT closes the gap with Vavasis' lower bound (up to a logarithmic factor), as we show that it has complexity $O\left(\sqrt{\frac{\log(1/\epsilon)}{\epsilon}}\right)$. In dimension $d=3$, we show a complexity of $O\left(\frac{\log(1/\epsilon)}{\epsilon}\right)$, improving upon Vavasis' $O\left(1 / \epsilon^{1.2} \right)$. In higher dimensions, GFT has the remarkable property of being a {\em logarithmic parallel depth} strategy, in stark contrast with the polynomial depth of gradient descent or Vavasis' algorithm. In this higher dimensional regime, the total work of GFT improves quadratically upon the only other known polylogarithmic depth strategy for this problem, namely naive grid search. We augment this result with another algorithm, named \emph{cut and flow} (CF), which improves upon Vavasis' algorithm in any fixed dimension.

\end{abstract}
\section{Introduction}
Let $f : \R^d \rightarrow \R$ be a smooth function (i.e., the map $x \mapsto \nabla f(x)$ is $1$-Lipschitz, and $f$ is possibly non-convex). We aim to find an $\epsilon$-approximate stationary point, i.e., a point $x \in \R^d$ such that $\|\nabla f(x)\|_2 \leq \epsilon$. It is an elementary exercise to verify that for smooth and bounded functions, gradient descent finds such a point in $O(1/\epsilon^2)$ steps, see e.g., \cite{Nes04}. Moreover, it was recently shown in \cite{CDHS19} that this result is {\em optimal}, in the sense that any procedure with only black-box access to $f$ (e.g., to its value and gradient) must, {\em in the worst case}, make $\Omega(1/\epsilon^2)$ queries before finding an $\epsilon$-approximate stationary point. This situation is akin to the non-smooth convex case, where the same result (optimality of gradient descent at complexity $1/\epsilon^2$) holds true for finding an $\epsilon$-approximate optimal point (i.e., such that $f(x) - \min_{y \in \R^d} f(y) \leq \epsilon$), \cite{NY83, Nes04}. 
\newline

There is an important footnote to both of these results (convex and non-convex), namely that optimality only holds in {\em arbitrarily high dimension} (specifically the hard instance in both cases require $d = \Omega(1/\epsilon^2)$). It is well-known that in the convex case this large dimension requirement is actually necessary, for the cutting plane type strategies (e.g., center of gravity) can find $\epsilon$-approximate optimal points on compact domains in $O(d \log(1/\epsilon))$ queries. It is natural to ask: \textbf{Is there some analogue to cutting planes for non-convex optimization?}\footnote{We note that a different perspective on this question from the one developed in this paper was investigated in \citep{Hin18}, where the author asks whether one can adapt {\em actual cutting planes} to non-convex settings. In particular \cite{Hin18} shows that one can improve upon gradient descent and obtain a complexity $O(\mathrm{poly}(d)/\epsilon^{4/3})$ with a cutting plane method, under a higher order smoothness assumption (namely third order instead of first order here).} In dimension $1$ it is easy to see that one can indeed do a binary search to find an approximate stationary point of a smooth non-convex function on an interval. The first non-trivial case is thus dimension $2$, which is the focus of this paper (although we also obtain new results in high dimensions, and in particular our approach does achieve $O(\mathrm{poly}(d) \log(1/\epsilon))$ {\em parallel depth}, see below for details).
\newline

This problem, of finding an approximate stationary point of a smooth function on a compact domain of $\R^2$, was studied in 1993 by Stephen A. Vavasis in \citep{Vav93}. From an algorithmic perspective, his main observation is that in finite dimensional spaces one can speed up gradient descent by using a {\em warm start}. Specifically, observe that gradient descent only needs $O(\Delta/\epsilon^2)$ queries when starting from a $\Delta$-approximate optimal point. Leveraging smoothness (see e.g., Lemma \ref{lem:discretizationerror} below), observe that the best point on a $\sqrt{\Delta}$-net of the domain will be $\Delta$-approximate optimal. Thus starting gradient descent from the best point on $\sqrt{\Delta}$-net one obtains the complexity $O_d\left( \frac{\Delta}{\epsilon^2} + \frac{1}{\Delta^{d/2}} \right)$ in $\R^d$. Optimizing over $\Delta$, one obtains a $O_d\left(\left(\frac{1}{\epsilon}\right)^{\frac{2d}{d+2}}\right)$ complexity. In particular for $d=2$ this yields a $O(1/\epsilon)$ query strategy. In addition to this algorithmic advance, Vavasis also proved a lower bound of $\Omega(1/\sqrt{\epsilon})$ for deterministic algorithms. In this paper we close the gap up to a logarithmic term. Our main contribution is a new strategy loosely inspired by cutting planes, which we call {\em gradient flow trapping} (GFT), with complexity $O\left(\sqrt{\frac{\log(1/\epsilon)}{\epsilon}} \right)$. We also extend Vavasis lower bound to randomized algorithms, by connecting the problem with unpredictable walks in probability theory \citep{BPP98}.
\newline

Although we focus on $d=2$ for the description and analysis of GFT in this paper, one can in fact easily generalize to higher dimensions. Before stating our results there, we first make precise the notion of approximate stationary points, and we also introduce the {\em parallel query} model.

\subsection{Approximate stationary point} 
We focus on the constraint set $[0,1]^d$, although this is not necessary and we make this choice mainly for ease of exposition. Let us fix a differentiable function $f : [0,1]^d \rightarrow \R$ such that $\forall x,y \in [0,1]^d$, $\|\nabla f(x) - \nabla f(y)\|_2 \leq \|x-y\|_2$. Our goal is to find a point $x \in [0,1]^d$ such that for any $\epsilon' > \epsilon$, there exists a neighborhood $N \subset [0,1]^d$ of $x$ such that for any $y \in N$,
\[
f(x)  \leq f(y) + \epsilon' \cdot \|x-y\|_2 \,.
\]
We say that such an $x$ is an $\epsilon$-stationary point (its existence is guaranteed by the extreme value theorem).
In particular if $x \in (0,1)^d$ this means that $\|\nabla f(x)\|_2 \leq \epsilon$. More generally, for $x = (x^1, \hdots, x^d) \in [0,1]^d$ (possibly on the boundary), let us define the {\em projected gradient} at $x$, $g(x) = (g_1(x), \hdots, g_d(x))$ by:
\[
g_i(x) = \begin{cases}
\max \left(0, \frac{df}{dx^i}(x) \right)& \text{ if } x^i = 0 \,,\\
 \frac{df}{dx^i}(x) 			& \text{ if } x^i \in (0,1) \,,\\
\min\left(0, \frac{df}{dx^i}(x)\right )& \text{ if } x^i = 1 \,.
\end{cases}
\]
It is standard to show (see also \cite{Vav93}) that $x$ is an $\epsilon$-stationary point of $f$ if and only if $\|g(x)\|_2 \leq \epsilon$.

\subsection{Parallel query model}
In the classical black-box model, the algorithm can sequentially query an oracle at points $x \in [0,1]^d$ and obtain the value\footnote{Technically we consider here the zeroth order oracle model. It is clear that one can obtain a first order oracle model from it, at the expense of a multiplicative dimension blow-up in the complexity. In the context of this paper an extra factor $d$ is small, and thus we do not dwell on the distinction between zeroth order and first order.} of the function $f(x)$. An extension of this model, first considered in \citep{Nem94}, is as follows: instead of submitting queries one by one sequentially, the algorithm can submit any number of queries in parallel. One can then count the {\em depth}, defined as the number of rounds of interaction with the oracle, and the {\em total work}, defined as the total number of queries.

It seems that the parallel complexity of finding stationary points has not been studied before. As far as we know, the only low-depth algorithm (say depth polylogarithmic in $1/\epsilon$) is the naive grid search: simply query all the points on an $\epsilon$-net of $[0,1]^d$ (it is guaranteed that one point in such a net is an $\epsilon$-stationary point). This strategy has depth $1$, and total work $O(1/\epsilon^d)$. As we explain next, the high-dimensional version of GFT has depth $O(\mathrm{poly}(d) \log(1/\epsilon))$, and its total work improves at least quadratically upon grid search.

\subsection{Complexity bounds for GFT}
In this paper we give a complete proof of the following near-optimal result in dimension $2$:

\begin{theorem} \label{thm:main}
Let $d=2$. The gradient flow trapping algorithm (see Section \ref{sec:formal}) finds a $4 \epsilon$-stationary point with less than $10^5 \sqrt{\frac{\log(1/\epsilon)}{\epsilon}}$ queries to the value of $f$.
\end{theorem}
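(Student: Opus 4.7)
The plan is to analyze GFT through a \emph{trapping invariant} that it maintains iteratively: at each step the algorithm keeps an axis-aligned rectangle $R \subseteq [0,1]^2$ together with function values on a $\delta$-net of $\partial R$, where $\delta \asymp \sqrt{\epsilon/\log(1/\epsilon)}$ is a resolution parameter to be calibrated later. The invariant is that the minimum certified value on $\partial R$ strictly exceeds the minimum value already observed in $\mathrm{int}(R)$ by at least a fixed $C\delta^2$ slack. Since $f$ is $1$-smooth, values on the $\delta$-net upper-bound $f$ on all of $\partial R$ up to an $O(\delta^2)$ error (this is the content of the discretization lemma already cited in the introduction), so the invariant certifies that the minimum of $f|_R$ is attained in $\mathrm{int}(R)$, which is therefore a projected stationary point of $f$ restricted to $R$.

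The first step I would carry out is initialization: $R_0 = [0,1]^2$ equipped with a $\delta$-net on its boundary trivially satisfies the trap invariant, since by definition the projected gradient kills the outward components on $\partial [0,1]^2$. The iteration then bisects the longer axis of $R$ by a segment $\ell$, queries $f$ on a $\delta$-net of $\ell$, and retains the half of $R$ on which the invariant continues to hold. After $O(\log(1/\epsilon))$ bisections the rectangle has diameter $O(\sqrt{\epsilon})$, and at this scale smoothness combined with the inward-pointing boundary condition imply that the best queried point in the rectangle has projected gradient of norm at most $4\epsilon$ (with the factor $4$ absorbing: an $\epsilon$-contribution from smoothness across the diameter, an $\epsilon$ from the finite-difference error at the net, an $\epsilon$ from the invariant's slack, and an $\epsilon$ from the projection on the boundary of $[0,1]^2$). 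Query accounting is a geometric series: the initial net on $\partial R_0$ costs $O(1/\delta) = O\bigl(\sqrt{\log(1/\epsilon)/\epsilon}\bigr)$, and the cut at level $k$ contributes $O(2^{-k/2}/\delta)$ queries, so the sum over $O(\log(1/\epsilon))$ levels remains $O\bigl(\sqrt{\log(1/\epsilon)/\epsilon}\bigr)$, matching the target up to the prefactor $10^5$.

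The hard part, and in my view the technical heart of the analysis, is showing that the trap invariant really is preserved under bisection. Concretely, after querying the $\delta$-net on the cut $\ell$ and comparing it with the values on the opposite side of $R$, one must prove that \emph{at least one} of the two halves again admits a $\delta$-net on its new boundary (consisting of a piece of $\partial R$ together with $\ell$) satisfying the $C\delta^2$-slack invariant; intuitively, the discarded half cannot contain the interior minimum because the values of $f$ on its boundary dominate those already seen. A secondary but crucial subtlety is that the $O(\delta^2)$ slack absorbed at each bisection accumulates across the $O(\log(1/\epsilon))$ levels, so the total slack is $O(\delta^2 \log(1/\epsilon))$, which must be $\lesssim \epsilon^2$ to give a $4\epsilon$-stationary point at the end. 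This constraint is exactly what forces the calibration $\delta = \Theta\bigl(\sqrt{\epsilon/\log(1/\epsilon)}\bigr)$ — the seemingly suboptimal extra $\sqrt{\log(1/\epsilon)}$ factor in the final query bound is precisely the cost of maintaining the trapping invariant under the logarithmic number of cuts needed to reach diameter $\sqrt{\epsilon}$. Bookkeeping the constants so that everything fits inside $10^5$ is then routine.
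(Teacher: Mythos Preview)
Your proposal has a genuine gap at exactly the step you flag as ``the hard part'': single-cut bisection does \emph{not} preserve your trapping invariant. Suppose the current pivot $x$ satisfies $f(x) < \min_{\partial R\text{-net}} f - C\delta^2$, you bisect along $\ell$, and the best point $y$ on the $\ell$-net has $f(y) \leq f(x)$. Then $y$ lies on $\ell$, which is a boundary edge of \emph{both} halves, and you have no certified interior point in either half beating its boundary by $C\delta^2$. Your sentence ``at least one of the two halves again admits a $\delta$-net on its new boundary satisfying the $C\delta^2$-slack invariant'' is simply false in this case. The paper's Section~\ref{sec:informal} discusses precisely this obstruction and explains why the naive additive-slack bookkeeping you propose cannot rescue it.

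The paper's actual GFT resolves this with two mechanisms you are missing. First, the \emph{parallel trap} (Section~\ref{sec:paralleltrap}) makes \emph{two} parallel cuts $E,F$ at distance $r/6$; whichever cut carries the new pivot, the \emph{other} cut becomes the new edge, so the pivot is always at distance $\geq r/6$ from every new edge. This yields only $P_\epsilon$ (the distance-weighted property $f(x) < f^*_\delta(E) - \delta^2/8 + \epsilon\,\mathrm{dist}(x,E)$), not $P_0$. Second, the \emph{edge fixing} subroutine (Section~\ref{sec:edgefixing}) restores $P_0$ edge by edge, at the price of letting the constant in $P_{\epsilon'}$ grow multiplicatively by $(1+\frac{1}{500\log(1/\epsilon)})$ per shrink --- this multiplicative (not additive) growth is what keeps $\epsilon' \leq 2\epsilon$ over the $O(\log(1/\epsilon))$ rounds. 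Two smaller errors in your outline: you terminate at diameter $O(\sqrt{\epsilon})$, but smoothness then only gives a $\sqrt{\epsilon}$-stationary point (the paper runs until diameter $\leq 2\epsilon$); and your slack budget should be $\lesssim \epsilon$, not $\lesssim \epsilon^2$ (though your calibration $\delta \asymp \sqrt{\epsilon/\log(1/\epsilon)}$ is consistent with $\epsilon$).
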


It turns out that there is nothing inherently two-dimensional about GFT. At a very high level, one can think of GFT as making hyperplane cuts, just like standard cutting planes methods in convex optimization. While in the convex case those hyperplane cuts are simply obtained by gradients, here we obtain them by querying a $\tilde{O}(\sqrt{\epsilon})$-net on a carefully selected small set of hyperplanes. Note also that the meaning of a ``cut" is much more delicate than for traditional cutting planes methods (here we use those cuts to ``trap" gradient flows). All of these ideas are more easily expressed in dimension $2$, but generalizing them to higher dimensions presents no new difficulties (besides heavier notation). In Section \ref{sec:highdim} we prove the following result:

\begin{theorem} \label{thm:highdim}
The high-dimensional version of GFT finds an $\epsilon$-stationary point in depth $O(d^2 \log(d/\epsilon))$ and in total work $d^{O(d)} \cdot \left(\frac{\log(1/\epsilon)}{\epsilon}\right)^{\frac{d-1}{2}} $.
\end{theorem}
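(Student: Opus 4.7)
The plan is to lift the 2D trapping analysis of Section \ref{sec:formal} to general $d$, replacing each line cut by a cut along an axis-aligned hyperplane. The high-dimensional GFT maintains an axis-aligned box $B \subseteq [0,1]^d$ together with, on each of its $2d$ faces $F$, a certificate: the values of $f$ on a fine net of $F$ whose minimum is below a sliding threshold $\tau$. By the same discretization argument used for $d=2$ (cf.\ Lemma \ref{lem:discretizationerror}), such a certificate forces the projected gradient flow through $F$ to point inward; together across the $2d$ faces this guarantees that an $\epsilon$-stationary point is trapped inside $B$. The algorithm terminates once $B$ has side length of order $\sqrt{\epsilon}$ in every coordinate, at which point any point of any net certificate is, up to the usual Lipschitz slack, a $4\epsilon$-stationary point, exactly as in the 2D argument.

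Each round picks a coordinate $i$ and an axis-parallel hyperplane $H$ roughly halving $B$ in the $x^i$-direction, and issues a single parallel batch of queries on a $\tilde O(\sqrt{\epsilon})$-net of $H \cap B$. If the minimum queried value on $H$ is at most the current threshold $\tau$, then $H$ certifies one of the two sub-boxes, and $B$ is replaced by that sub-box; otherwise $\tau$ is lowered before the next attempt. Cycling round-robin through the $d$ coordinates, each coordinate of $B$ must shrink from $1$ down to $\tilde O(\sqrt{\epsilon})$, requiring $O(\log(1/\epsilon))$ successful cuts in that coordinate. Shrinking in one coordinate exposes a freshly trimmed portion of every face orthogonal to $H$, so those $2(d-1)$ faces must be re-certified on their new cross-sections before subsequent cuts in other coordinates can be booked as progress; amortizing this re-certification over the $d$ coordinates, together with the $O(\log(d/\epsilon))$ levels of the threshold $\tau$, yields the claimed depth $O(d^2 \log(d/\epsilon))$.

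For total work, each round performs a single parallel batch of queries on a $(d-1)$-dimensional net with spacing $\tilde O(\sqrt{\epsilon/d})$ (the $d$ in the denominator arising from the $\sqrt{d-1}$ factor relating $\ell_2$ net-cell diameter to coordinate spacing in the discretization lemma), of cardinality $O\bigl((d \log(1/\epsilon)/\epsilon)^{(d-1)/2}\bigr)$. Multiplying this query count by the $O(d^2 \log(d/\epsilon))$ depth and absorbing all polynomial-in-$d$ factors, as well as the dimension-dependent constants from re-certification bookkeeping, into a $d^{O(d)}$ prefactor gives the bound $d^{O(d)} (\log(1/\epsilon)/\epsilon)^{(d-1)/2}$. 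The main obstacle is precisely this certificate bookkeeping across the $2d$ faces of $B$: a cut in one coordinate reshapes every face orthogonal to it, so the trapping invariant must be restored on each affected face before progress is locked in, and it is the careful amortization of this restoration work that produces both the extra $d$ factor in the depth and the $d^{O(d)}$ in the total work. Once this invariant is formalized, correctness and the complexity estimates reduce to the same chain of arguments as in the 2D proof, with the only new ingredient being the global amortization that yields the $d^2$ factor in the depth.
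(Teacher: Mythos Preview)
Your proposal has the right high-level shape---hyperplane cuts, $(d-1)$-dimensional nets of spacing $\tilde O(\sqrt{\epsilon})$, volume shrinking for depth---but it contains a concrete error and departs from the paper's construction in ways that leave correctness unverified. The error is the termination criterion: you stop when $B$ has side length $O(\sqrt{\epsilon})$, but the step converting ``$B$ traps a stationary point'' into ``the output is $O(\epsilon)$-stationary'' uses $1$-Lipschitzness of $\nabla f$ and costs the \emph{diameter} of $B$, not its square. Stopping at side $\sqrt{\epsilon}$ yields only an $O(\sqrt{d\epsilon})$-stationary point. The paper (like the 2D case you cite) stops at diameter $\leq 2\epsilon$; this is precisely what drives the depth, since one then needs $\mathrm{vol}(R)^{1/d} \lesssim \epsilon/\sqrt{d}$, hence $O(d\log(d/\epsilon))$ constant-factor volume reductions.

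Moreover, the algorithm you describe is not the paper's high-dimensional GFT. The paper keeps a \emph{pivot} $x$ and the $P_c$ property of Section~\ref{sec:basic}, and lifts the two 2D subroutines verbatim: \emph{parallel trap} queries nets on \emph{two} parallel hyperplanes and always shrinks the volume (possibly relaxing $P_0$ to $P_\epsilon$); \emph{edge fixing} queries the boundaries of three nested small hyperrectangles around the pivot and either upgrades one more face to $P_0$ or shrinks. With $2d$ faces, at most $2d$ consecutive edge-fixings can occur without a shrink, so the volume contracts at least once every $2d+1$ rounds; combined with $\mathrm{diam}(R)\le 3\sqrt d\,\mathrm{vol}(R)^{1/d}$ this gives depth $O(d^2\log(d/\epsilon))$ directly, with no amortization argument needed. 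Your single-hyperplane scheme with a global threshold $\tau$ is different: without a pivot, Lemma~\ref{lem:main} has no starting point for the flow; it is unclear which sub-box to keep when a cut succeeds; and lowering $\tau$ on a failed cut would appear to invalidate the existing face certificates (which only asserted a minimum $\leq$ the \emph{old} $\tau$). Finally, ``re-certifying $2(d-1)$ orthogonal faces after each cut'' is not what the paper does: inherited faces keep their $P_c$ status for free because the new pivot has smaller $f$-value, so only the freshly created face needs work.
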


In particular we see that the three-dimensional version of GFT has complexity $O\left(\frac{\log(1/\epsilon)}{\epsilon} \right)$. This improves upon the previous state of the art complexity $O(1/\epsilon^{1.2})$ \citep{Vav93}. However, on the contrary to the two-dimensional case, we believe that here GFT is suboptimal. As we discuss in Section \ref{sec:heuristic}, in dimension $3$ we conjecture the lower bound $\Omega(1/\epsilon^{0.6})$.

In dimensions $d \geq 4$, the total work given by Theorem \ref{thm:highdim} is worse than the total work $O\left(\left(\frac{1}{\epsilon}\right)^{\frac{2d}{d+2}}\right)$ of Vavasis' algorithm. On the other hand, the depth of Vavasis' algorithm is of the same order as its total work, in stark contrast with GFT which maintains a logarithmic depth even in higher dimensions. Among algorithms with polylogarithmic depth, the total work given in Theorem \ref{thm:highdim} is more than a quadratic improvement (in fixed dimension) over the previous state of the art (namely naive grid search).\\

We also propose a simplified version of GFT, which we call \emph{Cut and Flow} (CF), that always improve upon Vavasis' algorithm (in fact in dimension $d$ it attains the same rate as Vavasis in dimension $d-1$). In particular CF attains the same rate as GFT for $d=3$, and improves upon on it for any $d>3$. It is however a serial algorithm and does not enjoy the parallel properties of GFT.
\begin{theorem}\label{thm:secondary}
	Fix $d \in \mathbb{N}$. The cut and flow algorithm (see Section \ref{sec:first alg}) finds an $\epsilon$-stationary point with less than $5d^3\log\left(\frac{d}{\epsilon}\right)\left(\frac{1}{\epsilon}\right)^{\frac{2d-2}{d+1}}$ queries to the values of $f$ and $\nabla f$.
\end{theorem}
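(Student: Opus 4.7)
The plan is to prove Theorem~\ref{thm:secondary} by reducing the $d$-dimensional problem to a sequence of $(d-1)$-dimensional subproblems through a hyperplane-cutting scheme (the ``cut'') followed by gradient descent refinement (the ``flow''). I would split the argument into an analysis of the inner subroutine and an analysis of the outer iteration.

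First I would analyze the $(d-1)$-dimensional subroutine, which finds an $\epsilon$-stationary point of $f$ restricted to a $(d-1)$-dimensional axis-aligned box $H$. Following Vavasis, I would use a two-stage warm start: query $f$ on an $h$-net of $H$ at cost $O(h^{-(d-1)})$ and invoke Lemma~\ref{lem:discretizationerror} to obtain a point $p^*$ with $f(p^*) \le \min_H f + O(h^2)$; then run gradient descent from $p^*$ for $O(h^2/\epsilon^2)$ iterations to produce an $\epsilon$-stationary point of $f|_H$. Balancing the two terms by choosing $h = \Theta(\epsilon^{2/(d+1)})$ yields a total cost of $O\bigl((1/\epsilon)^{(2d-2)/(d+1)}\bigr)$, matching the rate of Vavasis' algorithm one dimension lower.

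Next I would set up the outer cut-and-recurse scheme. Maintain a box $B \subseteq [0,1]^d$, initially the whole cube, guaranteed to contain an $\epsilon$-stationary point. At each round, select a coordinate $e_i$ along which $B$ is widest, let $H$ be the mid-hyperplane of $B$ perpendicular to $e_i$, run the inner subroutine on $H$ to obtain a point $x^* \in H$ with tangential projected gradient at most $\epsilon$, and inspect the normal partial derivative. If $|\partial_i f(x^*)| \le \epsilon$, return $x^*$ (after a $\sqrt{d}$ adjustment to combine the tangential and normal bounds into a single norm bound). Otherwise, the sign of $\partial_i f(x^*)$ identifies the half of $B$ into which $f$ descends, and we recurse on that half; by $1$-Lipschitzness of $\nabla f$ this sign is consistent throughout the retained half-box, so the half really does contain an $\epsilon$-stationary point. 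After $O(\log(d/\epsilon))$ bisections along a fixed direction, the thickness of $B$ in that direction is at most $\epsilon$, at which point smoothness propagates the coordinatewise bound to the whole box; iterating over all $d$ directions gives $O(d\log(d/\epsilon))$ outer rounds in total.

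Finally I would combine the two counts: multiplying $O(d\log(d/\epsilon))$ outer rounds by the per-round cost $O\bigl((1/\epsilon)^{(2d-2)/(d+1)}\bigr)$, and absorbing an extra factor of $d^2$ from the grid-constant in dimension $d-1$ and from the norm-tightening step, produces the stated bound $5 d^3 \log(d/\epsilon) (1/\epsilon)^{(2d-2)/(d+1)}$. The hardest part of the plan is the correctness of the outer scheme: because the tangentially-stationary point is \emph{recomputed} on each new mid-hyperplane and is not the same from round to round, showing that after $O(\log(d/\epsilon))$ bisections the normal derivative is forced below $\epsilon$ uniformly over the retained box requires a careful Lipschitz-based sign-tracking argument to rule out that repeated sign flips cause the scheme to stall. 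Once this correctness step is in place, the complexity count is essentially Vavasis' analysis lifted to a one-dimension-lower recursion.
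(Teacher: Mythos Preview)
Your outline departs from the paper's Cut and Flow in a way that leaves a genuine correctness gap. The paper does \emph{not} compute a tangentially stationary point on the mid-hyperplane and branch on the sign of the normal derivative. Instead it maintains the invariant $P_0$ of Section~\ref{sec:basic}: the current pivot $x$ satisfies $f(x) < f^*_\delta(E) - \delta^2/8$ for every face $E$ of the current box not contained in $\partial[0,1]^d$. At each round it queries a $\delta$-net on the bisecting face $E$, takes $\bar{x}$ to be whichever of the old pivot and the net minimizer has smaller $f$-value, and then runs $\delta^2/\epsilon^2$ steps of \emph{full-space} gradient descent from $\bar{x}$. This forces $f(\tilde{x}) \le f^*_\delta(E) - \delta^2/8$, restoring $P_0$ for the new face, while $P_0$ for the old faces is inherited from $f(\tilde{x}) \le f(x)$ (Lemma~\ref{lem:p0bisection}). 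One then recurses on whichever half contains $\tilde{x}$, and correctness follows directly from Lemma~\ref{lem:main}. The per-round cost is the same $(2d)^{d-1}/\delta^{d-1} + \delta^2/\epsilon^2$ that your balancing produces, optimized at $\delta = 2d\,\epsilon^{2/(d+1)}$.

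Your branching rule is where the argument breaks. The sentence ``by $1$-Lipschitzness of $\nabla f$ this sign is consistent throughout the retained half-box'' is false: Lipschitzness only pins down the sign of $\partial_i f$ within distance $|\partial_i f(x^*)|$ of $x^*$, whereas the half-box has diameter of order one in early rounds. More fundamentally, even if the gradient flow from $x^*$ initially enters the chosen half $B^-$, nothing in your invariant (``$B$ contains an $\epsilon$-stationary point'') prevents the flow from exiting $B^-$ through a face created by an \emph{earlier} bisection, so you cannot conclude that $B^-$ contains an $\epsilon$-stationary point. The worry you flag---sign flips causing the scheme to ``stall''---is not the real danger; the real danger is that after a few rounds the retained box may simply fail to contain any $\epsilon$-stationary point, so that when its diameter drops below $\epsilon$ the output is wrong. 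The paper's $P_0$ invariant is precisely what closes this hole: because the pivot's value lies strictly below $\min_E f$ on every non-original face $E$, the flow is trapped and Lemma~\ref{lem:main} applies at every round. Replacing your tangential descent by full-space descent from the better of the old pivot and the net minimizer is exactly the repair, and it is the paper's algorithm.
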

\subsection{Paper organization} 
The rest of the paper (besides Section \ref{sec:LB} and Section \ref{sec:open}) is dedicated to motivating, describing and analyzing our gradient flow trapping strategy in dimension $2$ (from now on we fix $d=2$, unless specified otherwise). In Section \ref{sec:basic} we make a basic ``local to global" observation about gradient flow which forms the basis of our ``trapping" strategy. Section \ref{sec:informal} is an informal section on how one could potentially use this local to global phenomenon to design an algorithm, and we outline some of the difficulties one has to overcome. As a warm-up, to demonstrate the use of our ideas, we introduce the "cut and flow" algorithm in Section \ref{sec:first alg} and prove Theorem \ref{thm:secondary}. In Section \ref{sec:formal} we formally describe our new strategy and analyze its complexity. In Section \ref{sec:LB} we extend Vavasis' $\Omega(1/\sqrt{\epsilon})$ lower bound to randomized algorithms. Finally we conclude the paper in Section \ref{sec:open} by introducing several open problems related to higher dimensions.

\section{A local to global phenomenon for gradient flow} \label{sec:basic}
We begin with some definitions. For an axis-aligned hyperrectangle $R =[a_1,b_1] \times\dots\times [a_d,b_d]$ in $\R^d$, we denote its volume and diameter by 
\[
\mathrm{diam}(R) : = \sqrt{\sum\limits_{i=1}^d(b_i - a_i)^2} \text{ and } \mathrm{vol}(R) : = \prod\limits_{i=1}^d(b_i - a_i)
\]
We further define the aspect ratio of $R$ as $\frac{\max_i\left(b_i-a_i\right)}{\min_i\left(b_i-a_i\right)}$.
The $2d$ faces of $R$ are the subsets of the form:
\[
[a_1,b_1]\times\dots \times \{a_i\} \times \dots \times [a_d,b_d] \text{ and } [a_1,b_1]\times\dots \times \{b_i\} \times \dots \times [a_d,b_d],
\]
for $i = 1,\dots, d$. The boundary of $R$, which we denote $\partial R$ is the union of all faces. \\

If $E \subset [0,1]^d$ is a $(d-1)$-dimensional hyperrectangle and $\delta > 0$, we say that $N \subset E$ is a $\delta$-net of $E$, if for any $x \in E$, there exists some $y \in N$ such that $\|x-y\|_2 \leq \delta$. We will always assume implicitly that if $N \subset E$ is a $\delta$-net, then the vertices of $E$ are elements of $N$.\\

We denote $f^*_{\delta}(E)$ for the largest value one can obtain by minimizing $f$ on a $\delta$-net of $E$. Formally,
\[
f^*_\delta(E) = \sup\limits_N\inf\limits_{x\in N}f(x),
\]
where the supremum is taken over all $\delta$-nets of $E$.
We say that a pair $(E,x)$ of segment/point in $[0,1]^d$ (where $E$ is {\em not} a subset of a face of $[0,1]^d$) satisfies the property $P_c$ for some $c\geq0$ if there exists $\delta>0$ such that
\[
f(x) < f^*_{\delta}(E) - \frac{\delta^2}{8}  + c \cdot \mathrm{dist}(x,E) \,,
\]
where 
\[
\mathrm{dist}(x,E) := \inf\limits_{y \in E} \|x-y\|_2.
\]
When $E$ is a subset of $\partial[0,1]^d$ we {\em always} say that $(E,x)$ satisfies $P_c$ (for any $c \geq 0$ and any $x \in [0,1]^d$).\\
For an axis-aligned hyperrectangle $R$ and $x \in R$, we say that $(R,x)$ satisfies $P_c$ if, for any of the $2d$ faces $E$ of $R$, one has that $(E,x)$ satisfies $P_c$. We refer to $x$ as the {\em pivot} for $R$. 

Our main observation is as follows:
\begin{lemma} \label{lem:main}
Let $R$ be a hyperrectangle such that $(R,x)$ satisfies $P_c$ for some $x \in R$ and $c \geq 0$. Then $R$ must contain a $c$-stationary point (in fact the gradient flow emanating from $x$ must visit a $c$-stationary point before exiting $R$).
\end{lemma}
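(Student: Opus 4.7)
My plan is to argue by contradiction. Suppose the gradient flow $\gamma\colon[0,T]\to R$ starting at $x=\gamma(0)$ exits $R$ at time $T$ through some face $E$ at the point $y=\gamma(T)$, and that $\|g(\gamma(t))\|_2>c$ throughout $[0,T]$ (so no $c$-stationary point is visited). I would first observe that $E$ cannot be contained in $\partial[0,1]^d$: the projected gradient flow never leaves $[0,1]^d$, so it cannot transversally cross such a face outward. Hence we may assume $E$ is interior to $[0,1]^d$, so that the property $P_c$ on $(E,x)$ gives a non-trivial constraint.

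Integrating the identity $\tfrac{d}{dt}f(\gamma(t))=-\|g(\gamma(t))\|_2^2$ and using $\|g\|_2>c$ along $\gamma$ yields
\[
f(x)-f(y)=\int_0^T\|g\|_2^2\,dt \;>\; c\int_0^T\|g\|_2\,dt \;=\; c\cdot L \;\geq\; c\cdot\mathrm{dist}(x,E),
\]
since the arc length $L$ of $\gamma|_{[0,T]}$ is at least $\|y-x\|_2\geq\mathrm{dist}(x,E)$ as $y\in E$. Thus $f(y)<f(x)-c\cdot\mathrm{dist}(x,E)$. Combining with $P_c$ on $(E,x)$---which yields some $\delta>0$ with $f(x)<f^*_\delta(E)-\delta^2/8+c\cdot\mathrm{dist}(x,E)$---produces the key intermediate inequality
\[
f(y)+\delta^2/8 \;<\; f^*_\delta(E).
\]

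To contradict this, I would upper bound $f^*_\delta(E)$ via the $1$-smoothness of $f$ on $E$. For any $\delta$-net $N$ of the 1D segment $E$, the exit point $y$ lies between two consecutive net points $z_1,z_2\in N$; writing $y=(1-\lambda)z_1+\lambda z_2$ with $\lambda\in[0,1]$ and taking the $(1-\lambda,\lambda)$-weighted combination of the smoothness upper bounds $f(z_i)\leq f(y)+\nabla f(y)\cdot(z_i-y)+\tfrac12\|z_i-y\|_2^2$ cancels the linear term (since $(1-\lambda)(z_1-y)+\lambda(z_2-y)=0$) and gives $\min(f(z_1),f(z_2))\leq f(y)+\tfrac12\lambda(1-\lambda)\|z_2-z_1\|_2^2$. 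Taking $\sup$ over $\delta$-nets then bounds $f^*_\delta(E)$ from above by $f(y)$ plus a multiple of $\delta^2$, contradicting the intermediate inequality above.

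The main obstacle I anticipate is matching the constant $\delta^2/8$ precisely: the $\delta$-net spacing bound $\|z_2-z_1\|_2\leq 2\delta$ together with $\lambda(1-\lambda)\leq 1/4$ only yields $f^*_\delta(E)\leq f(y)+\delta^2/2$, which falls short of the required $\delta^2/8$ threshold by a factor of four. Closing this gap will require some refinement---for instance, relocating the base point from the exit $y$ to the restricted minimizer $y^\ast=\argmin_E f$, where the tangential gradient vanishes and the two adjacent net points are forced to straddle $y^\ast$ subject to $\min(\|z_1-y^\ast\|_2,\|z_2-y^\ast\|_2)\leq\delta$, or else working with a sharper notion of net spacing. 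In any case the overall structure of the proof---flow descent combined with $P_c$ and local smoothness on $E$---is robust to this technical choice.
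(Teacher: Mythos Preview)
Your approach is essentially the paper's: argue by contradiction via the projected gradient flow, note that the exit face $E$ cannot lie in $\partial[0,1]^d$, obtain $f(y)<f(x)-c\cdot\mathrm{dist}(x,E)$ from the descent along the flow (the paper uses the unit-speed reparametrization, you the natural one---same inequality), and then contradict $P_c$ through a discretization bound on $f^*_\delta(E)$. The refinement you anticipate---basing that estimate at $y^*=\argmin_E f$ rather than at the exit point, so the tangential derivative vanishes and smoothness gives $f(z)\leq f(y^*)+\tfrac12\|z-y^*\|_2^2$ for $z\in E$---is exactly the paper's route (isolated as Lemma~\ref{lem:discretizationerror}), after which a net point at distance $\leq\delta/2$ from $y^*$ yields the $\delta^2/8$ constant.
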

This lemma will be our basic tool to develop cutting plane-like strategies for non-convex optimization. From ``local" information (values on a net of the boundary of $R$) one deduces a ``global" property (existence of approximate stationary point in $R$).

\begin{proof}
Let us assume by contradiction that $R$ does not contain a $c$-stationary point, and consider the unit-speed gradient flow $(x(t))_{t \geq 0}$ constrained to stay in $[0,1]^d$. That is, $x(t)$ is the piecewise differentiable function defined by $x(0)=x$ and $\frac{d}{dt} x(t) = - \frac{g(x(t))}{\|g(x(t))\|_2}$, where 
$g$ is the projected gradient defined in the previous section. Since there is no stationary point in $R$, it must be that the gradient flow exits $R$. Let us denote $T = \inf \{ t \geq 0 : x(t) \not\in R\}$, and $E$ a face of $R$ such that $x(T) \in E$.  Remark that $E$ cannot be part of a face of $[0,1]^d$. Furthermore, for any $0 \leq t \leq T$, one has 
\[
f(x(t)) - f(x(0)) = \int_{0}^t g(x(s)) \cdot \frac{d}{ds} x(s) ds \leq - c \cdot t \leq - c \cdot \|x(t) - x(0)\|_2 \,.
\]
where the first inequality uses that $R$ does not contain a $c$-stationary point. In particular, this implies $f(x(T)) - f(x) \leq - c \cdot \mathrm{dist}(x,E)$, so that,
\[
\min_{y \in E} f(y) \leq f(x) - c \cdot \mathrm{dist}(x, E) \,.
\]
Lemma \ref{lem:discretizationerror} below shows that for any $\delta >0$ one has $f^*_{\delta}(E) \leq \min_{y \in E} f(y) + \frac{\delta^2}{8}$, and thus together with the above display it shows that $(E,x)$ does {\em not} satisfy $P_c$, which is a contradiction.\\
\end{proof}

\begin{lemma} \label{lem:discretizationerror}
For any $(d-1)$-dimensional hyperrectangle $E \subset [0,1]^2$ and $\delta > 0$ one has:
\[
f^*_{\delta}(E) \leq \min_{y \in E} f(y) + \frac{\delta^2}{8}  \,.
\]
\end{lemma}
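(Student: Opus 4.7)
The idea is to combine the $1$-Lipschitz property of $\nabla f$ (smoothness) with the first-order optimality of a minimizer of $f$ on $E$. Let $y^* \in \argmin_{y \in E} f(y)$. If $y^*$ happens to coincide with a vertex of $E$, then $y^* \in N$ by our standing convention that nets contain the vertices, and $\inf_{x \in N} f(x) \leq f(y^*)$ is immediate. So assume $y^*$ lies in the relative interior of some face of $E$; in the $d=2$ setting relevant to Lemma~\ref{lem:main}, this simply means $y^*$ is interior to the segment $E$.

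The key trick is to pick \emph{two} net points $x_1, x_2 \in N$ on $E$ that bracket $y^*$ (rather than the single nearest net point, which would only yield $\delta^2/2$), and to kill the first-order error via a convex combination. Concretely, write $y^* = (1-t)x_1 + t x_2$ with $t \in [0,1]$ and set $L = \|x_2 - x_1\|_2$. The smoothness bound gives
\[
f(x_i) \leq f(y^*) + \langle \nabla f(y^*), x_i - y^* \rangle + \tfrac{1}{2}\|x_i - y^*\|_2^2, \qquad i = 1,2.
\]
Taking the convex combination with weights $1-t$ and $t$, the linear terms telescope to $\langle \nabla f(y^*), (1-t)(x_1 - y^*) + t(x_2 - y^*) \rangle = 0$, while the quadratic contributions sum to $\tfrac{1}{2} t(1-t) L^2 \leq L^2/8$. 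Since the minimum is at most the average,
\[
\min(f(x_1), f(x_2)) \;\leq\; (1-t)f(x_1) + t f(x_2) \;\leq\; f(y^*) + \tfrac{L^2}{8}.
\]
The $\delta$-net property then bounds $L \leq \delta$ (two consecutive net points along a line inside $E$ are within mesh distance $\delta$), yielding $f^*_{\delta}(E) \leq \min_E f + \delta^2/8$ after taking supremum over nets.

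The only subtle point is the cancellation of $\nabla f(y^*)$: this works precisely because $y^*$ lies \emph{between} $x_1$ and $x_2$ on $E$, so that the weighted vector $(1-t)(x_1-y^*) + t(x_2-y^*)$ vanishes identically---no tangential-derivative computation is needed, and the argument is indifferent to which face of $E$ contains $y^*$. The vertex edge case, together with the bookkeeping to guarantee the existence of two bracketing net points on a segment inside $E$, is handled by the convention that vertices of $E$ belong to $N$. The factor $1/8$ (rather than the naive $1/2$ obtained by expanding at the nearest net point) is exactly the saving from $\max_{t \in [0,1]} t(1-t) = 1/4$.
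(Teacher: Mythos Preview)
Your proof is correct and takes a genuinely different route from the paper's. The paper uses first-order optimality of the minimizer $y^*$ in the relative interior of $E$: the tangential derivative vanishes, so smoothness gives $f(y)\le f(y^*)+\tfrac12\|y-y^*\|_2^2$ for every $y\in E$, and a \emph{single} net point at distance at most $\delta/2$ finishes. You instead bracket $y^*$ between two consecutive net points and take the convex combination that reconstructs $y^*$, so the linear term cancels algebraically without ever invoking optimality; the factor $1/8$ then comes from $\max_{t\in[0,1]} t(1-t)=1/4$ rather than from $(\delta/2)^2$. Your argument is in a sense more robust---it never needs $\nabla f(y^*)$ to be orthogonal to $E$---but it is inherently one-dimensional (the bracketing lives on a segment), whereas the paper's optimality-based argument extends verbatim to $(d{-}1)$-dimensional faces. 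One bookkeeping remark: your claim $L\le\delta$ reads ``$\delta$-net'' as mesh width $\delta$; under the paper's stated covering-radius definition consecutive net points can be $2\delta$ apart. The paper's own proof makes the equivalent move (it asserts a net point within $\delta/2$ of $y^*$), so you are matching its intended convention, but it is worth being aware that both arguments rely on this reading.
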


\begin{proof}
Let $x \in E$ be such that $f(x) = \min_{z \in E} f(z)$. If $x$ is a vertex of $E$, then we are done since we require the endpoints of $E$ to be in the $\delta$-nets. Otherwise $x$ is in the relative interior of $E$, and thus one has $\nabla f(x) \cdot (y - x) = 0$ for any $y \in E$. In particular by smoothness one has:
\begin{align*}
f(y) &= f(x) + \int_{0}^1 \nabla f(x + t (y-x)) \cdot (y-x) dt\\
 &\leq f(x) + \int_{0}^1 t\cdot \|y-x\|_2^2 dt = f(x) + \frac{1}{2} \|y-x\|_2^2 \,.
\end{align*}
Moreover for any $\delta$-net of $E$ there exists $y$ such that $\|y-x\|_2 \leq \frac{\delta}{2}$, and thus $f(y) \leq f(x) + \delta^2 / 8$, which concludes the proof.
\end{proof}

Our algorithmic approach to finding stationary points will be to somehow shrink the domain of consideration over time. At first it can be slightly unclear how the newly created boundaries interact with the definition of stationary points. To dispell any mystery, it might be useful to keep in mind the following observation, which states that if $(R,x)$ satisfies $P_c$, then $x$ cannot be on a boundary of $R$ which was not part of the original boundary of $[0,1]^d$.
\begin{lemma} \label{lem:minor}
Let $R$ be a rectangle such that $(R,x)$ satisfies $P_c$ for some $x \in R$ and $c \geq 0$. Then $x\notin \partial R \setminus \partial [0,1]^d$.
\end{lemma}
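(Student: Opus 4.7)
The plan is to proceed by contradiction: assume $x \in \partial R \setminus \partial [0,1]^d$ and locate a face $E$ of $R$ containing $x$ that is not a subset of $\partial [0,1]^d$, then show the defining inequality of $P_c$ for the pair $(E,x)$ must fail.

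First, I would choose the face explicitly. Since $x \in \partial R$, there is at least one face $E$ of $R$ with $x \in E$. Because $x \notin \partial[0,1]^d$, the point $x$ lies in the relative interior of $[0,1]^d$ in at least the coordinate orthogonal to $E$, and by picking the face in the right direction we may assume $E$ itself is not contained in $\partial [0,1]^d$ (if every face through $x$ were contained in $\partial[0,1]^d$, that would force $x \in \partial[0,1]^d$, contradicting the assumption). Consequently, the nontrivial clause of the definition of $P_c$ applies to the pair $(E,x)$: there exists some $\delta > 0$ with
\[
f(x) < f^*_\delta(E) - \frac{\delta^2}{8} + c \cdot \mathrm{dist}(x,E).
\]

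Second, I would exploit the fact that $x \in E$ gives $\mathrm{dist}(x,E) = 0$ and $\min_{y \in E} f(y) \leq f(x)$. Plugging these into the above inequality yields $f(x) < f^*_\delta(E) - \delta^2/8$. But Lemma \ref{lem:discretizationerror}, applied to $E$, gives $f^*_\delta(E) \leq \min_{y \in E} f(y) + \delta^2/8 \leq f(x) + \delta^2/8$, i.e., $f^*_\delta(E) - \delta^2/8 \leq f(x)$. This contradicts the strict inequality just obtained, completing the proof.

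There is no real obstacle here; the only subtlety is checking that one can pick a face $E$ through $x$ that is not itself contained in $\partial [0,1]^d$, which I would verify by a short case analysis on the coordinates of $x$ (the faces of $R$ through $x$ correspond to coordinates where $x$ meets the boundary of $R$, and if all such faces were inside $\partial[0,1]^d$ the corresponding coordinates of $x$ would all be $0$ or $1$, putting $x$ in $\partial[0,1]^d$). After that, the contradiction follows immediately from Lemma \ref{lem:discretizationerror}.
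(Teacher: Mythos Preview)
Your proposal is correct and follows essentially the same route as the paper: pick a face $E$ of $R$ through $x$ that is not contained in $\partial[0,1]^d$, combine the $P_c$ inequality with Lemma~\ref{lem:discretizationerror} and $\mathrm{dist}(x,E)=0$ to reach $f(x) < \min_{y\in E} f(y)$, a contradiction. The only difference is that you spell out the face-selection step, whereas the paper treats it as evident (indeed, any face through $x$ works, since a face contained in $\partial[0,1]^d$ would force $x\in\partial[0,1]^d$).
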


\begin{proof}
Let $E$ be a face of $R$ which is not a subset of $\partial [0,1]^d$. Then by definition of $P_c$, and by invoking Lemma \ref{lem:discretizationerror}, one has:
\[
f(x) < f_\delta^*(E) -\frac{\delta^2}{8} + c \cdot \mathrm{dist}(x,E) \leq \min_{y \in E} f(y) + c \cdot \mathrm{dist}(x,E) \,.
\]
In particular if $x \in E$ then $\mathrm{dist}(x,E) = 0$, and thus $f(x) < \min_{y \in E} f(y)$ which is a contradiction.
\end{proof}

\section{From Lemma \ref{lem:main} to an algorithm} \label{sec:informal}
Lemma \ref{lem:main} naturally leads to the following algorithmic idea (for sake of simplicity in this discussion we replace squares by circles): given some current candidate point $x$ in some well-conditioned domain (e.g., such that the domain contains and is contained in balls centered at $x$ and of comparable sizes), query a $\sqrt{\epsilon}$-net on the circle $C = \{ y : \|y-x\|_{2} = 1\}$, and denote $y$ for the best point found on this net. If one finds a significant enough improvement, say $f(y) < f(x) - \frac{3}{4} \epsilon$, then this is great news, as it means that one obtained a \textbf{per query} improvement of $\Theta(\epsilon^{-3/2})$ (to be compared with gradient descent which only yields an improvement of $\Theta(\epsilon^{-2})$). On the other hand if no such improvement is found, then the gradient flow from $x$ must visit an $\epsilon$-stationary point inside $C$.\footnote{In ``essence" $(C,x)$ satisfies $P_{\epsilon}$, this is only slightly informal since we defined $P_{\epsilon}$ for rectangles and $C$ is a circle. In particular we chose the improvement $\frac{3}{4} \epsilon$ instead of the larger $\frac{7}{8} \epsilon$ (which is enough to obtain $P_c$) to account for an extra term due to polygonal approximation of the circle. We encourage the reader to ignore this irrelevant technicality.} In other words one can now hope to restrict the domain of consideration to a region inside $C$, which is a constant fraction smaller than the original domain. Figure \ref{fig:figure0} illustrates the two possibilities. 
\newline

Optimistically this strategy would give a $\tilde{O}(B/\epsilon^{3/2})$ rate for $B$-bounded smooth functions (since at any given scale one could make at most $O(B/\epsilon^{3/2})$ improvement steps). In particular together with the warm start this would tentatively yield a $\tilde{O}(1/\epsilon^{3/4})$ rate, thus already improving the state-of-the-art $O(1/\epsilon)$ by Vavasis. 
\newline

\begin{figure}
	\centering
	\includegraphics[height = 4cm]{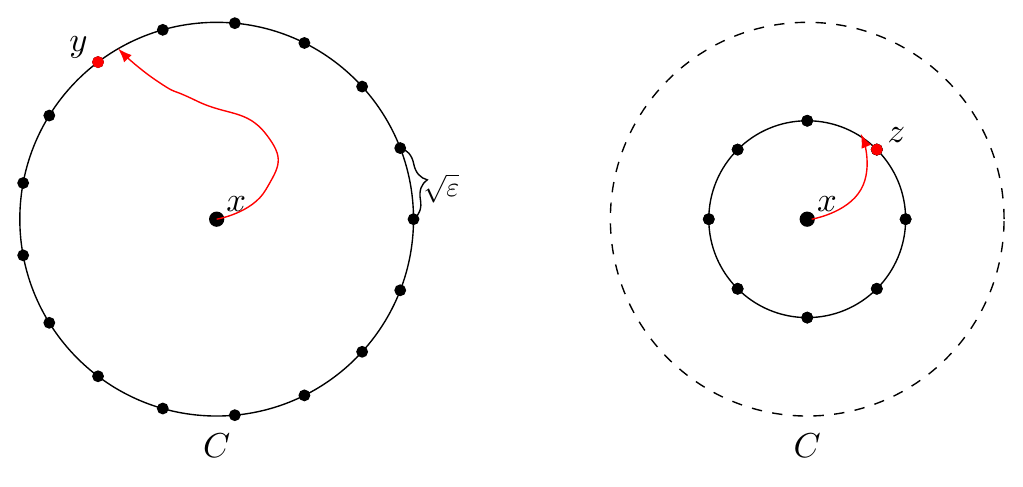}
	\caption{The red curve illustrates the gradient flow emanating from $x$. On the left, the flow does not visit an $\varepsilon$-stationary point and $y$ has a significantly smaller function value than $x$. Otherwise, as in the right, we shrink the domain.}
	\label{fig:figure0}
\end{figure}

There is however a difficulty in the induction part of the argument. Indeed, what we know after a shrinking step is that the current point $x$ satisfies $f(x) \leq f(y) + \epsilon$ for any $y \in C$. Now we would like to query a net on $\{y : \|y-x\|_{2} = 1/2\}$. Say that after such querying we find that we can't shrink, namely we found some point $z$ with $f(z) < f(x) - \frac{\epsilon}{2} + \frac{\delta^2}{8}$, and in particular $f(z) < f(y) + \frac{1}{2} \epsilon + \frac{\delta^2}{8}$ for any $y \in C$. Could the gradient flow from $z$ escape the original circle $C$ without visiting an $\epsilon$-stationary point? Unfortunately the answer is yes. Indeed (because of the discretization error $\delta^2/8$) one cannot rule out that there would be a point $y \in C$ with $f(y) < f(z) - \frac{\epsilon}{2}$, and since $C$ is only at distance $1/2$ from $z$, such a point could be attained from $z$ with a gradient flow without $\epsilon$-stationary points. Of course one could say that instead of satisfying $P_{\epsilon}$ we now only satisfy $P_{\epsilon + \delta^2 / 4}$, and try to control the increase of the approximation guarantee, but such an approach would not improve upon the $1/\epsilon^2$ of gradient descent (simply because we could additively worsen the approximation guarantee too many times).
\newline

The core part of the above argument will remain in our full algorithm (querying a $\sqrt{\epsilon}$-net to shrink the domain). However it is made more difficult by the discretization error as we just saw. We also note that this discretization issue does not appear in discrete spaces, which is one reason why discrete spaces are much easier than continuous spaces for local optimization problems.
\newline

Technically we observe that the whole issue of discretization comes from the fact that when we update the center, we move closer to the boundary, which we ``pay" in the term $\mathrm{dist}(x,E)$ in $P_c$, and we cannot ``afford" it because of the discretization error term that we suffer when we update. Thus this issue would disappear if in our induction hypothesis we had $P_0$ for the boundary. Our strategy will work in two steps: first we give a querying strategy for a domain with $P_0$ that ensures that one can \textbf{always} shrink with $P_{\epsilon}$ guaranteed for the boundary, and secondly we give a method to essentially turn a $P_{\epsilon}$ boundary into $P_0$.
\section{Cut and flow} \label{sec:first alg}
We now fix $d \in \mathbb{N}$ and consider $[0,1]^d$. We say that a pair $(H,x)$ is a \emph{domain} if $H \subset [0,1]^d$ is an axis-aligned hyperrectangle and $x \in H$. In this section, we further require that if $H = [a_1, b_1]\times\dots\times[a_d, b_d]$, then for every $1 \leq i,j \leq d$, $\frac{b_i - a_i}{b_j - a_j} \in \{\frac{1}{2},1,2\}$. In other words, all edges of $H$ either have the same length or differ by a factor of $2$. The Cut and Flow (CF) algorithm is performed with two alternating steps, \emph{bisection} and \emph{descent} (See Figure \ref{fig:figurebisection} for an illustration of the two steps, when $d = 2$).
\begin{enumerate}
	\item At the \emph{bisection} step, we have a domain $(H,x)$ satisfying $P_0$. Let $k\in [d]$ be any coordinate such that $b_k - a_k$ is maximal and set the midpoint, $m_k = \frac{a_k + b_k}{2}$. We now bisect $H$ into two equal parts,
	\begin{align*}
	H_1 &= [a_1,b_1]\times\dots \times[a_k, m_k]\times\dots\times[a_d,b_d],\\
	H_2 &= [a_1,b_1]\times\dots \times[m_k, b_k]\times\dots\times[a_d,b_d],
	\end{align*}
	so that $H_1 \cup H_2 = H$ and $E = H_1 \cap H_2$ is a $(d-1)$-dimensional hyperrectangle. Set $N \subset E$ to be a $\delta$-net and,
	$$x_N = \arg\min\limits_{y\in N}f(y).$$
	Here $\delta$ is some small parameter to determined later.
	To choose a new pivot $\bar{x}$ for the domain we compare $f(x_N)$ and $f(x)$. If $f(x) \leq f(x_N)$, set $\bar{x} = x$ otherwise $\bar{x} = x_N$. 
	We end the step with the two pairs $(H_1, \bar{x})$, $(H_2, \bar{x})$.
	\item  The \emph{descent} step takes the two pairs produced by the \emph{bisection} step and returns a new domain $(\tilde{H}, \tilde{x})$ satisfying $P_0$ such that $\tilde{H} \in \{H_1, H_2\}.$ This is done be performing gradient descent iterations:
	\begin{equation} \label{eq:GD}
	\bar{x}_i = \bar{x}_{i-1} -\nabla f(\bar{x}_{i-1}),
	\end{equation}
	where $\bar{x}_0 = \bar{x}$.
	Set $T = \frac{\delta^2}{\epsilon^2}$, and $\tilde{x} = \bar{x}_T$. Then, $\tilde{H} = H_1$ if $\tilde{x} \in H_1$ and $\tilde{H} = H_2$ otherwise.
\end{enumerate}
The CF algorithm starts with the domain $(H_0,x_0)$ where $H_0 = [0,1]^d$ and $x_0$ is arbitrary. Given $(H_t, x_t)$ the algorithm runs a \emph{bisection} step, followed by a \emph{descent} step and sets $(H_{t+1},x_{t+1}) = (\tilde{H},\tilde{x})$, as described above. The algorithm stops when the diameter of $H_t$ is smaller than $\varepsilon$.\\
\begin{figure}[h!]
	\centering
	\includegraphics[width=9.55cm]{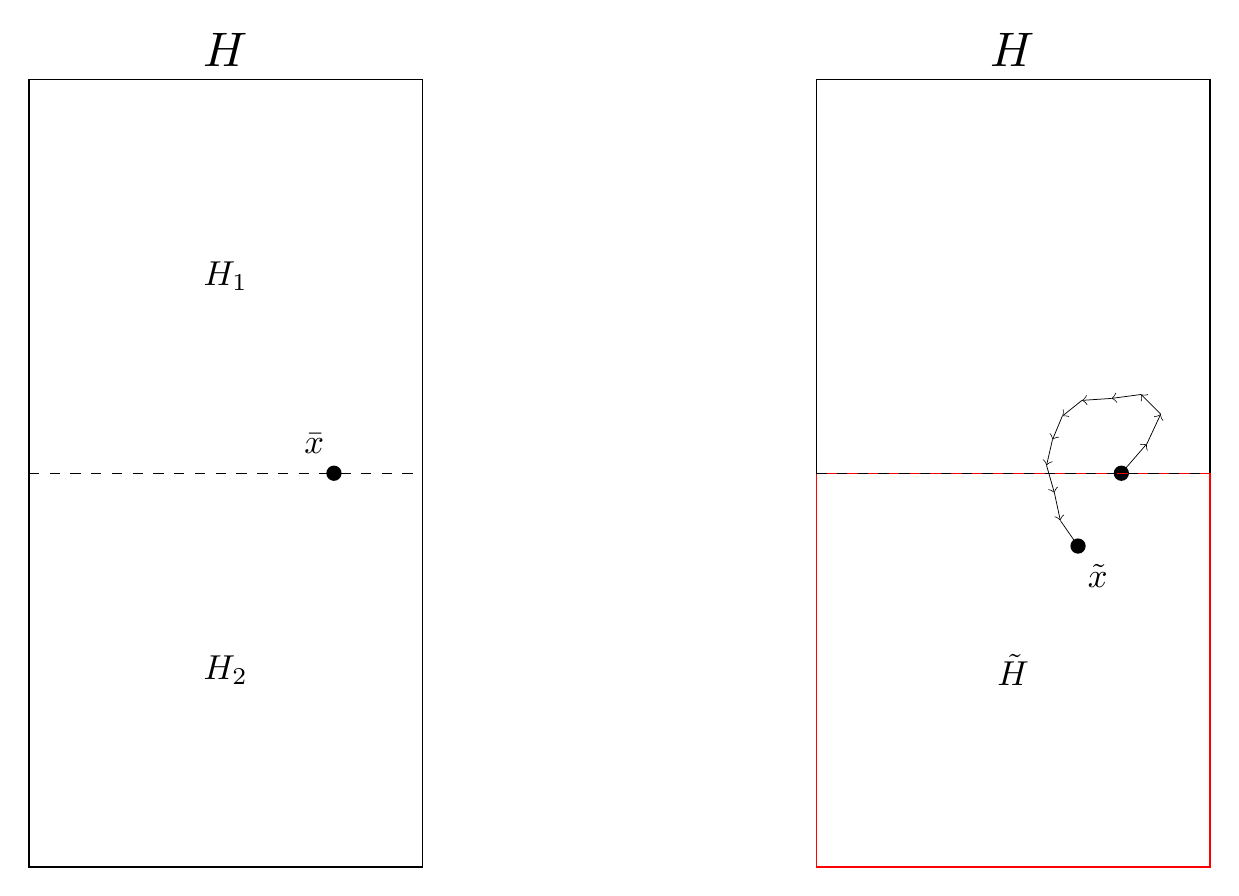}
	\caption{The left image shows the bisection of $H$ into two equal parts $H_1$ and $H_2$. The right image shows the trajectory of gradient descent, starting from $\bar{x}$ and terminating at $\tilde{x}$, inside $\tilde{H}$.}
	\label{fig:figurebisection}
\end{figure}
\newpage
Let us first prove that at the end of the \emph{descent} step, the obtained domain satisfies $P_0$.
\begin{lemma} \label{lem:p0bisection}
	Suppose that $(H,x)$ satisfies $P_0$, then either the \emph{descent} step finds an $\epsilon$-stationary point or $(\tilde{H},\tilde{x})$ satisfies $P_0$ as well.
\end{lemma}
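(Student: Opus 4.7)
The plan is to verify the $P_0$ property on every face of $\tilde{H}$. Apart from the new bisection face $E$, every face of $\tilde{H}$ is inherited from a face of $H$: one face of $\tilde{H}$ equals the face of $H$ perpendicular to the bisection axis $k$ on the side of $\tilde{H}$, and the other $2d-2$ faces of $\tilde{H}$ are halves of the remaining faces of $H$. So there are two cases to handle.

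For an inherited face $F$ of $\tilde{H}$ not contained in $\partial[0,1]^d$: the hypothesis $P_0$ for $(H,x)$ combined with Lemma \ref{lem:discretizationerror} implies the strict inequality $f(x) < \min_{F_H} f$ for the parent face $F_H \supseteq F$ of $H$, hence $f(x) < \min_{F} f$. The pivot rule gives $f(\bar{x}) \leq f(x)$, and the descent lemma for a $1$-smooth function with step size $1$ makes $f$ non-increasing along the iterates, so $f(\tilde{x}) \leq f(\bar{x}) \leq f(x) < \min_F f$. This strict inequality upgrades to $P_0$ for $(F, \tilde{x})$ by choosing the witness $\delta'$ small enough that $(\delta')^2/8$ is less than the gap, using $\min_F f \leq f^*_{\delta'}(F)$.

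For the new face $E$: by construction $f(x_N) \leq f^*_\delta(E)$, and the pivot rule gives $f(\bar{x}) \leq f(x_N) \leq f^*_\delta(E)$. If no iterate of the descent step is $\epsilon$-stationary, then $\|\nabla f(\bar{x}_i)\|_2 > \epsilon$ at each of the $T$ steps, so the per-step decrease of at least $\tfrac{1}{2}\|\nabla f(\bar{x}_i)\|_2^2 > \epsilon^2/2$ accumulates to
$$f(\tilde{x}) \leq f(\bar{x}) - T\epsilon^2/2 = f(\bar{x}) - \delta^2/2 \leq f^*_\delta(E) - \delta^2/2 < f^*_\delta(E) - \delta^2/8,$$
which is exactly $P_0$ for $(E, \tilde{x})$ with witness $\delta$. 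Note that $E \not\subset \partial[0,1]^d$ because its coordinate in the bisection direction is $m_k = (a_k+b_k)/2 \in (0,1)$, so this $P_0$ check is not vacuous.

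The main obstacle I anticipate is the bookkeeping around gradient descent iterates potentially touching $\partial[0,1]^d$: in that case one should read $\epsilon$-stationarity off the projected gradient $g$ rather than $\nabla f$, and make sure the iterates $\bar{x}_i$ remain in $[0,1]^d$ so that $f$ and the descent inequality are well defined (e.g.\ via an implicit projection or by invoking smoothness relative to the constrained flow). Once those technicalities are handled, the rest reduces to combining the monotonicity of the pivot value under both the bisection rule and the gradient descent step with Lemma \ref{lem:discretizationerror} to translate between $\min_F f$ and $f^*_\delta(F)$.
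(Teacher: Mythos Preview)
Your proposal is correct and follows essentially the same approach as the paper: first bound the descent $f(\tilde{x}) \leq f(\bar{x}) - T\epsilon^2/2 = f(\bar{x}) - \delta^2/2$, then verify $P_0$ separately on the new bisection face $E$ (using $f(\bar{x}) \leq f^*_\delta(E)$) and on the remaining faces (using $f(\tilde{x}) \leq f(x)$ together with the hypothesis). Your treatment of the inherited faces is in fact slightly more careful than the paper's: the paper writes that for a face $E' \neq E$ of $\tilde{H}$ one has $E' \subset \partial H$ and ``by assumption $(E',x)$ satisfies $P_0$'', but strictly speaking the hypothesis only gives $P_0$ on the full faces of $H$, whereas $2d-2$ of the faces of $\tilde{H}$ are proper halves of those. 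Your detour through $f(x) < \min_{F_H} f \leq \min_F f$ via Lemma~\ref{lem:discretizationerror}, followed by choosing a fresh witness $\delta'$, closes this small gap. The boundary issue you flag at the end is likewise not addressed in the paper's proof.
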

\begin{proof}
	Let us first estimate the value of $f(\tilde{x})$. Observe that, by smoothness of $f$, if we consider the gradient descent iterates \eqref{eq:GD}, we have
	$$f(\bar{x}_{i-1})-f(\bar{x}_{i}) \geq \|\nabla f(\bar{x}_{i-1})\|^2_2 - \frac{1}{2}\|\nabla f(\bar{x}_{i-1})\|^2_2 =\frac{1}{2}\|\nabla f(\bar{x}_{i-1})\|^2_2\geq \frac{\varepsilon^2}{2},$$
	where the last inequality holds as long as $\bar{x}_{i-1}$ is not an $\varepsilon$-stationary point (see also Section 3.2 in \cite{bubeck2015convex}). It follows that,
	$$f(\tilde{x}) = f(\bar{x}_T) \leq f(\bar{x}) - \frac{T}{2}\epsilon^2 \leq f(x) - \frac{T}{2}\epsilon^2.$$
	Now, let $E' \subset \partial\tilde{H}$ be a face such that $E' \neq H_1 \cap H_2$. Then, $E' \subset \partial H$ and by assumption, $(E', x)$ satisfied $P_0$. Since $f(\tilde{x}) \leq f(x)$, it is clear that $(E', \tilde{x})$ satisfies $P_0$ as well.
	
	We are left with showing that, if $E = H_1 \cap H_2$, then $(E,\tilde{x})$ satisfies $P_0$.
	Indeed, from the construction, and using $T = \frac{\delta^2}{\epsilon^2}$, we have,
	$$f(\tilde{x}) \leq f(\bar{x}) - \frac{T}{2}\epsilon^2 \leq f_\delta^*(E) - \frac{T}{2}\epsilon^2 \leq f_\delta^*(E) - \frac{\delta^2}{8}.$$
\end{proof}

Let us now prove Theorem \ref{thm:secondary}.

\begin{proof}[of Theorem \ref{thm:secondary}]
	Observe that $\mathrm{diam}(H_0) = \sqrt{d}$ and that after performing $d$ consecutive \emph{bisection} steps, necessarily, every face of $H_t$ was bisected into two equal parts. 
	Hence, $\mathrm{diam}(H_{t+d}) \leq \frac{1}{2}\mathrm{diam}(H_{t})$, and,
	$$\mathrm{diam}(H_t) \leq \left(\frac{1}{2}\right)^{\lfloor\frac{t}{d}\rfloor}\sqrt{d}.$$
	Choose $T = \lceil d\log_2\left(\frac{\sqrt{d}}{\epsilon}\right)\rceil$, so that $\mathrm{diam}(H_T) \leq \epsilon$. We claim that $x_T$ is an $\epsilon$-stationary point. Indeed, by iterating Lemma \ref{lem:p0bisection} we know that the pair $(H_T, x_T)$ satisfies $P_0$. By Lemma \ref{lem:main}, there exists $x_*\in H_T$ which is a stationary point and $\|x_* - x_T\|_2 \leq \epsilon$.\\
	
	All that remains is to calculate the number of queries made by the algorithm. At the \emph{bisection} step we query a $\delta$-net $N$, over a $(d-1)$-dimensional hyperrectangle, contained in the unit cube. Elementary computations show that we can take,
	$$|N| \leq \frac{\left(2d\right)^{d-1}}{\delta^{d-1}}.$$
	Combined with the number of queries made by the \emph{descent step}, we see that the total number of queries made by the algorithm is,
	$$\left\lceil d\log_2\left(\frac{\sqrt{d}}{\epsilon}\right)\right\rceil\left(\frac{\left(2d\right)^{d-1}}{\delta^{d-1}} + \frac{\delta^2}{\epsilon^2} \right).$$
	We now optimize and choose $\delta = \epsilon^{\frac{2}{d+1}} 2d$. Substituting into the above equations shows that the number of queries is smaller than
	$$5d^3\log_2\left(\frac{d}{\epsilon}\right)\epsilon^{-\frac{2d-2}{d+1}}.$$
\end{proof}
\section{Gradient flow trapping} \label{sec:formal}
In this section we focus on the case $d=2$. We say that a pair $(R,x)$ is a {\em domain} if $R$ is an axis-aligned rectangle with aspect ratio bounded by $3$, and $x \in R$ (note that the definition of a domain is slightly different than in the previous section). The gradient flow trapping (GFT) algorithm is decomposed into two subroutines:
\begin{enumerate}
\item The first algorithm, which we call the {\em parallel trap}, takes as input a domain $(R,x)$ satisfying $P_0$. It returns a domain $(\tilde{R}, \tilde{x})$ satisfying $P_{\epsilon}$ and such that $\mathrm{vol}(\tilde{R}) \leq 0.95 \ \mathrm{vol}(R)$. The cost of this step is at most $2 \sqrt{\frac{\mathrm{diam}(R)}{\epsilon}}$ queries.
\item The second algorithm, which we call {\em edge fixing}, takes as input a domain $(R,x)$ satisfying $P_{\epsilon'}$ (for some $\epsilon' \in [\epsilon, 2 \epsilon]$) and such that for $k \in \{0,1,2,3\}$ edges $E$ of $R$ one also has $P_0$ for $(E,x)$. It returns a domain $(\tilde{R}, \tilde{x})$ such that either (i) it satisfies $P_{\epsilon'}$ and for $k+1$ edges it also satisfies $P_0$, or (ii) it satisfies $P_{\left(1+ \frac{1}{500 \log(1/\epsilon)} \right) \epsilon'}$ and furthermore $\mathrm{vol}(\tilde{R}) \leq 0.95 \ \mathrm{vol}(R)$. The cost of this step is at most $90 \sqrt{\frac{\mathrm{diam}(R) \log(1/\epsilon)}{\epsilon}}$ queries.
\end{enumerate}
Equipped with these subroutines, GFT proceeds as follows. Initialize $(R_0, x_0) = ([0,1]^2, (0.5,0.5))$, $\epsilon_0 = \epsilon$, and $k_0 = 4$. For $t \geq 0$: 
\begin{itemize}
\item if $k_t =4$, call {\em parallel trap} on $(R_t, x_t)$, and update $k_{t+1}=0$, $(R_{t+1},x_{t+1})=(\tilde{R_t}, \tilde{x_t})$, and $\epsilon_{t+1} = \epsilon$.
\item Otherwise call {\em edge fixing}, and update $(R_{t+1},x_{t+1})=(\tilde{R_t}, \tilde{x_t})$. If $R_{t+1} = R_t$ then set $k_{t+1}=k_t+1$ and $\epsilon_{t+1} = \epsilon_t$, and otherwise set $k_{t+1} = 0$ and $\epsilon_{t+1} = \left(1+ \frac{1}{500 \log(1/\epsilon)} \right) \epsilon_t$.
\end{itemize}
We terminate once the diameter of $R_t$ is smaller than $2 \epsilon$.

Next we give the complexity analysis of GFT assuming the claimed properties of the subroutines {\em parallel trap} and {\em edge fixing} in 1. and 2. above. We then proceed to describe in details the subroutines, and prove that they satisfy the claimed properties.

\subsection{Complexity analysis of GFT} \label{sec:complexity}
The following three lemmas give a proof of Theorem \ref{thm:main}.

\begin{lemma} \label{lem:proof1}
GFT stops after at most $200 \log(1/\epsilon)$ steps.
\end{lemma}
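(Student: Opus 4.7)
The plan is to split the iterations into two types based on how the pair $(R_t, x_t)$ evolves, then separately bound the number of each type. Call an iteration a \emph{shrinking step} if $\mathrm{vol}(R_{t+1}) \leq 0.95\,\mathrm{vol}(R_t)$, and a \emph{frozen step} otherwise. By the stated properties of the two subroutines, parallel trap always produces a shrinking step, and edge fixing produces either a frozen step (case (i), in which $R_{t+1}=R_t$ and $k_{t+1}=k_t+1$) or a shrinking step (case (ii)). So every iteration is one of these two types.

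First I would bound the number of frozen steps between consecutive shrinking steps. By the update rule, after a shrinking step the counter $k$ is reset to $0$, and each subsequent frozen step increments $k$ by $1$; once $k$ reaches $4$, parallel trap is invoked, which is a shrinking step. Hence at most $4$ frozen steps occur between two shrinking steps (and also at most $4$ before the first one, since $k_0=4$ forces an immediate parallel trap). This already shows that the total number of iterations is at most $5$ times the number of shrinking steps plus a small constant.

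Next I would bound the number of shrinking steps using a volume potential argument. We have $\mathrm{vol}(R_0)=1$, and every domain is required (by definition) to have aspect ratio at most $3$. For a rectangle with side lengths $a\leq b$ and $b/a\leq 3$, the diameter is $\sqrt{a^2+b^2}\leq a\sqrt{10}$, so $\mathrm{vol}(R)=ab\geq a^2 \geq \mathrm{diam}(R)^2/10$. Since the algorithm only terminates once $\mathrm{diam}(R_t)<2\epsilon$, any non-terminal $R_t$ satisfies $\mathrm{vol}(R_t)\geq \tfrac{4\epsilon^2}{10}$. After $s$ shrinking steps we have $\mathrm{vol}(R_t)\leq 0.95^{s}$, so $s \leq \log(10/(4\epsilon^2))/\log(1/0.95)\leq 40\log(1/\epsilon)$ for $\epsilon$ sufficiently small.

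Combining the two bounds gives a total of at most $5\cdot 40 \log(1/\epsilon)=200 \log(1/\epsilon)$ iterations, as claimed. The only subtle point is the aspect ratio: we need to be sure that each domain produced along the way genuinely satisfies the aspect-ratio-$\leq 3$ condition built into the definition of ``domain,'' so that the diameter-to-volume inequality is valid for every $R_t$. This is really an invariant to be checked when describing the subroutines rather than an obstacle in this lemma, so given that invariant the counting argument above closes the proof.
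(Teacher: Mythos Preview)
Your proof is correct and follows essentially the same approach as the paper: both arguments rest on the observation that at most four non-shrinking (edge-fixing) steps can occur before a shrinking step is forced, and both convert the resulting geometric decay of $\mathrm{vol}(R_t)$ into a bound on the number of steps via the aspect-ratio constraint linking volume and diameter. The only cosmetic difference is that the paper amortizes directly, writing $\mathrm{vol}(R_T)\le 0.99^T$ (since $0.95^{1/5}\approx 0.99$) and then solving for $T$, whereas you bound the number of shrinking steps first and then multiply by five; the arithmetic lands in the same place.
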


\begin{proof}
First note that at least one out of five steps of GFT reduces the volume of the domain by $0.95$ (since one can do at most four steps in a row of edge fixing without volume decrease). Thus on average the volume decrease per step is at least $0.99$, i.e., $\mathrm{vol}(R_T) \leq 0.99^T$. In particular since $R_T$ has aspect ratio smaller than $3$, it is easy to verify $\mathrm{diam}(R_T) \leq 2\sqrt{\mathrm{vol}(R_T)} \leq 2 \times 0.99^{T/2}$. Thus for any $T \geq \log_{100/99}(1/\epsilon^{2})$, one must have $\mathrm{diam}(R_T) \leq 2 \epsilon$. Thus we see that GFT performs at most $\log_{100/99}(1/\epsilon^{2}) \leq 200 \log(1/\epsilon)$ steps.
\end{proof}

\begin{lemma}
When GFT stops, its pivot is a $4 \epsilon$-stationary point.
\end{lemma}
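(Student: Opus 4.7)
The plan is to combine two structural facts about the terminal iterate: the accumulated accuracy parameter $\epsilon_T$ is at most $2\epsilon$, and the terminal domain contains an $\epsilon_T$-stationary point within distance $2\epsilon$ of $x_T$.

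First I would bound $\epsilon_T$. The quantity $\epsilon_t$ is reset to $\epsilon$ at every parallel trap call and is multiplied by at most $(1 + \tfrac{1}{500\log(1/\epsilon)})$ on each volume-decreasing edge fixing step. Since Lemma \ref{lem:proof1} caps the total number of steps by $200\log(1/\epsilon)$, we get
\[
\epsilon_T \;\leq\; \epsilon \, \left(1 + \tfrac{1}{500\log(1/\epsilon)}\right)^{200\log(1/\epsilon)} \;\leq\; \epsilon \, e^{2/5} \;<\; 2\epsilon.
\]
Since by invariant $(R_T, x_T)$ satisfies $P_{\epsilon_T}$, Lemma \ref{lem:main} produces $x^* \in R_T$ with $\|g(x^*)\|_2 \leq \epsilon_T$, and the termination condition yields $\|x_T - x^*\|_2 \leq \mathrm{diam}(R_T) \leq 2\epsilon$.

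It remains to upgrade the existence of a stationary point near $x_T$ to stationarity of $x_T$ itself. I would argue coordinate by coordinate, seeking for each $i \in \{1,2\}$ a two-sided bound on $\partial_i f(x_T)$. If the face $E = \{x^i = a_i\}$ (resp.\ $\{x^i = b_i\}$) of $R_T$ is interior to $[0,1]^2$, then $P_{\epsilon_T}$ combined with Lemma \ref{lem:discretizationerror} gives $f(y) > f(x_T) - \epsilon_T \, \mathrm{dist}(x_T, E)$ for every $y \in E$; taking $y$ to be the orthogonal projection of $x_T$ onto $E$ and expanding $f(y) - f(x_T)$ via smoothness along the axis-aligned segment yields $\partial_i f(x_T) \geq -\epsilon_T - \mathrm{diam}(R_T)/2$ (and the symmetric upper bound from the opposite face). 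If the face $E$ lies on $\partial[0,1]^2$, the opposite face must be interior to $[0,1]^2$ since $\mathrm{diam}(R_T) \leq 2\epsilon < 1$, and $x^{*,i} \in [0, \mathrm{diam}(R_T)] \subset [0,1)$, so inspecting the piecewise definition of $g$ (either directly in the interior sub-case, or through $\max(0, \partial_i f(x^*)) = g_i(x^*) \leq \epsilon_T$ in the on-boundary sub-case) gives exactly the one-sided bound $\partial_i f(x^*) \leq \epsilon_T$, which smoothness of $\nabla f$ transfers to $\partial_i f(x_T)$ with an additive loss of at most $\|x_T - x^*\|_2 \leq 2\epsilon$. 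Combining the one-sided bounds across both coordinates and accounting for the projection in $g$ yields $\|g(x_T)\|_2 \leq 4\epsilon$.

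The main obstacle is the boundary-face case above: the projected gradient $g$ is not a globally $1$-Lipschitz function of $x$, so the naive argument ``$x_T$ is within $2\epsilon$ of a stationary point, hence itself nearly stationary'' via $\|g(x_T) - g(x^*)\|_2 \leq \|x_T - x^*\|_2$ can fail when $x^*$ and $x_T$ sit on opposite sides of a coordinate boundary of $[0,1]^2$. The coordinate-by-coordinate split sidesteps this: on boundary faces we only extract a \emph{one-sided} bound on $\partial_i f(x^*)$ from $\|g(x^*)\|_2 \leq \epsilon_T$, but it is precisely the side of $\partial_i f(x_T)$ that we need, while the complementary side is always available from $P_{\epsilon_T}$ applied to the opposite (necessarily non-boundary) face of $R_T$.
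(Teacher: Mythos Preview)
Your argument follows the paper's outline exactly: bound $\epsilon_T \le 2\epsilon$ via the step count from Lemma~\ref{lem:proof1}, invoke Lemma~\ref{lem:main} to obtain an $\epsilon_T$-stationary $x^* \in R_T$, and then transfer stationarity to $x_T$ using $\mathrm{diam}(R_T) \le 2\epsilon$. The paper dispatches the last step in one phrase (``by smoothness''), whereas you supply a coordinate-by-coordinate argument that additionally invokes $P_{\epsilon_T}$ on the non-boundary faces of $R_T$. Your extra care is well placed: the projected gradient $g$ is \emph{not} $1$-Lipschitz on $[0,1]^2$ (e.g.\ take $f(x)=-Mx^1$ near the face $x^1=0$), so the one-liner does hide something, and your use of $P_{\epsilon_T}$ on the opposite (necessarily interior) face is exactly what rules out the bad boundary scenario.

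Two small wrinkles. First, the signs are swapped: from the face $\{x^i=a_i\}$ the Taylor expansion together with $f(y)>f(x_T)-\epsilon_T\,\mathrm{dist}(x_T,E)$ gives the \emph{upper} bound $\partial_i f(x_T) \le \epsilon_T + \tfrac{1}{2}(x_T^i-a_i)$, and the $b_i$-face gives the lower bound; your labelling is consistently reversed but the argument is symmetric, so nothing breaks. Second, assembling two coordinate-wise bounds of size $\epsilon_T+2\epsilon\le 4\epsilon$ into an $\ell_2$ bound naively costs a factor $\sqrt{2}$, so the final sentence ``yields $\|g(x_T)\|_2\le 4\epsilon$'' needs a slightly sharper combination than what you wrote. (In the fully interior case one has $g=\nabla f$ at both $x_T$ and $x^*$, so $\|g(x_T)\|_2 \le \|g(x^*)\|_2+\|x_T-x^*\|_2 \le 4\epsilon$ directly; only coordinates touching $\partial[0,1]^2$ need the face argument, and there the projection in $g$ can absorb one side.) This is a constant-chasing issue, not a gap in the method.
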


\begin{proof}
First note that $\epsilon_T \leq \left(1+\frac{1}{500 \log(1/\epsilon)}\right)^T \epsilon$, thus after $T \leq 200 \log(1/\epsilon)$ steps we know that $(R_T,x_T)$ satisfies at least $P_{2 \epsilon}$. In particular by Lemma \ref{lem:main}, $R_T$ must contain a $2 \epsilon$-stationary point, and since the diameter is less than $2 \epsilon$, it must be (by smoothness) that $x_T$ is a $4 \epsilon$-stationary point.
\end{proof}

\begin{lemma}
GFT makes at most $10^5 \sqrt{\frac{\log(1/\epsilon)}{\epsilon}}$ queries before it stops.
\end{lemma}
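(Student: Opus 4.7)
The plan is to sum the per-step query budget using the fact that $\mathrm{diam}(R_t)$ decreases geometrically in $t$, so even though the total number of steps is $O(\log(1/\epsilon))$, the total cost is essentially dominated by a geometric series whose first term already matches the target rate $\sqrt{\log(1/\epsilon)/\epsilon}$.

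First, I bound the cost of each individual step. Both subroutines' costs scale at most like $90\sqrt{\mathrm{diam}(R_t)\log(1/\epsilon)/\epsilon}$: the parallel trap costs only $2\sqrt{\mathrm{diam}(R_t)/\epsilon}$ queries, which is dominated by the edge fixing bound. So the total number of queries is at most
\[
\sum_{t=0}^{T-1} 90\sqrt{\frac{\mathrm{diam}(R_t)\log(1/\epsilon)}{\epsilon}} \;=\; 90\sqrt{\frac{\log(1/\epsilon)}{\epsilon}}\sum_{t=0}^{T-1}\sqrt{\mathrm{diam}(R_t)},
\]
where $T\leq 200\log(1/\epsilon)$ by Lemma \ref{lem:proof1}.

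Next, I control $\mathrm{diam}(R_t)$. The proof of Lemma \ref{lem:proof1} already showed that the volume decreases by a factor of at least $0.95$ in at least one out of every five steps, so the average per-step multiplicative decrease is at least $0.99$, giving $\mathrm{vol}(R_t)\leq 0.99^{t}$. Since the aspect ratio of $R_t$ is bounded by $3$ throughout the algorithm, we get $\mathrm{diam}(R_t)\leq 2\sqrt{\mathrm{vol}(R_t)} \leq 2\cdot 0.99^{t/2}$, and therefore $\sqrt{\mathrm{diam}(R_t)}\leq \sqrt{2}\cdot 0.99^{t/4}$.

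Substituting and bounding by the infinite geometric series,
\[
\sum_{t=0}^{T-1}\sqrt{\mathrm{diam}(R_t)}\;\leq\;\sqrt{2}\sum_{t=0}^{\infty}0.99^{t/4}\;=\;\frac{\sqrt{2}}{1-0.99^{1/4}}.
\]
A direct numerical estimate gives $1-0.99^{1/4}\geq 1/500$ (since $0.99^{1/4}\leq 1-\tfrac{1}{4}\cdot 0.01\cdot\tfrac{1}{1}$ up to tiny corrections, a clean verifiable bound being $0.99^{1/4}<0.9975$), so the sum is bounded by $500\sqrt{2}<800$. Plugging back into the cost bound yields at most $90\cdot 800\sqrt{\log(1/\epsilon)/\epsilon} = 72000\sqrt{\log(1/\epsilon)/\epsilon}$, which is below $10^{5}\sqrt{\log(1/\epsilon)/\epsilon}$. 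The only part that requires any care is the elementary numerical verification that $(1-0.99^{1/4})^{-1}$ is comfortably below $10^{5}/(90\sqrt{2})$; this is a routine calculation.
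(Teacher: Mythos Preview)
Your proof is correct and follows essentially the same approach as the paper: bound each step by $90\sqrt{\mathrm{diam}(R_t)\log(1/\epsilon)/\epsilon}$, use the diameter estimate $\mathrm{diam}(R_t)\leq 2\cdot 0.99^{t/2}$ from Lemma~\ref{lem:proof1}, and sum the resulting geometric series $\sum_{t\ge 0}0.99^{t/4}$. The paper's version is just more terse, omitting the explicit numerical bound on $(1-0.99^{1/4})^{-1}$ that you spell out.
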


\begin{proof}
As we saw in the proof of Lemma \ref{lem:proof1}, one has $\mathrm{diam}(R_t) \leq 2 \times 0.99^{t/2}$. Furthermore the $t^{th}$ step requires at most $90 \sqrt{\frac{\mathrm{diam}(R_t) \log(1/\epsilon)}{\epsilon}}$ queries. Thus the total number of queries is bounded by:
\[
90 \sqrt{\frac{2 \log(1/\epsilon)}{\epsilon}} \sum_{t=0}^{\infty} 0.99^{t/4} \leq 10^5 \sqrt{\frac{\log(1/\epsilon)}{\epsilon}} \,.
\]
\end{proof}

\subsection{A parallel trap} \label{sec:paralleltrap}
Let $(R,x)$ be a domain. We define two segments $E$ and $F$ in $R$ as follows. Assume that $R$ is a translation of $[0,s] \times [0,r]$. For sake of notation assume that in fact $R=[0,s] \times [0,r]$ with $s \in [r, 3r]$ and $x^1 \geq r/2$, where $x = (x^1,x^2)$ (in practice one always ensures this situation with a simple change of variables). Now we define $E = \{r/6\} \times [0,r]$ and $F=\{r/3\} \times [0,r]$ (See Figure \ref{fig:figure1}).
\begin{figure}[h!]
	\centering
	\includegraphics[width=10cm]{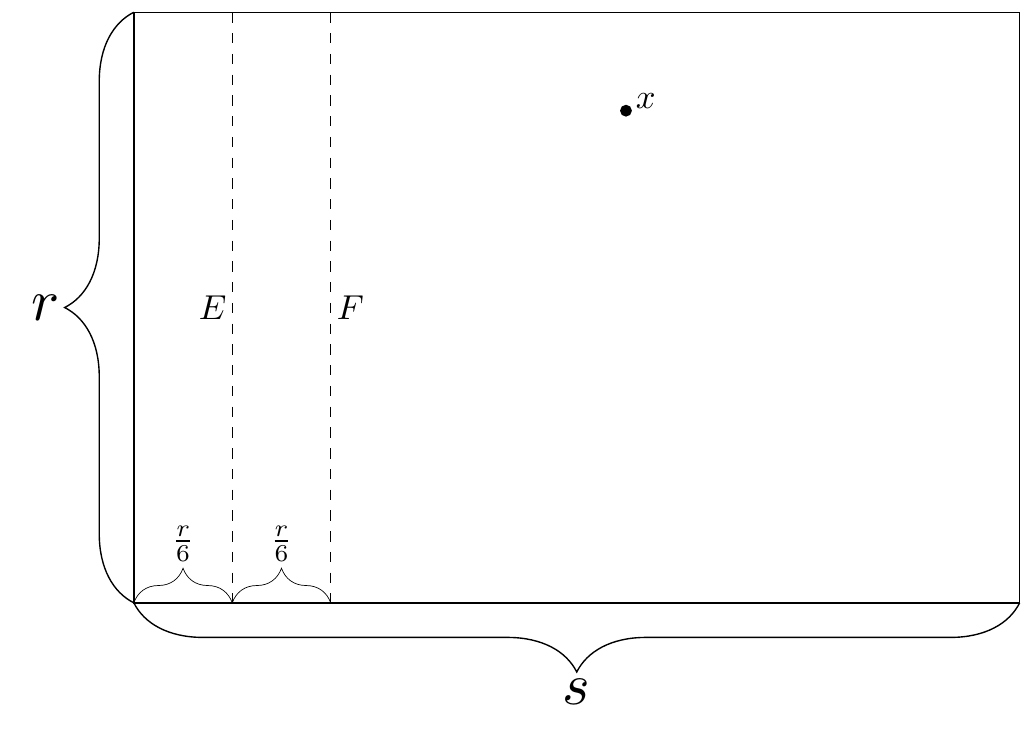}
	\caption{The \emph{parallel trap}}
	\label{fig:figure1}
\end{figure}

The parallel trap algorithm queries a $\sqrt{r \epsilon}$-net on both $E$ and $F$ (which cost at most $2 \frac{r}{\sqrt{r \epsilon}} = 2 \sqrt{\frac{r}{\epsilon}}$). Denote $\bar{x}$ to be the best point (in terms of $f$ value) found on the union of those nets. That is, denoting $N \subset F \cup E$ for the queried $\sqrt{r\epsilon}$-net, then
\[
\bar{x} = \arg\min\limits_{y\in N} f(y).
\]
 One has the following possibilities (see Figure \ref{fig:figure2} for an illustration):
\begin{itemize}
\item If $f(x) \leq f(\bar{x})$ then we set $\tilde{x}= x$ and $\tilde{R} = [r/3,s] \times [0,r]$.
\item Otherwise we set $\tilde{x} = \bar{x}$. If $\bar{x} \in F$ we set $\tilde{R} = [r/6,s] \times [0,r]$, and if $\bar{x} \in E$ we set $\tilde{R} = [0,r/3] \times [0,r]$.
\end{itemize}
The above construction is justified by the following lemma (a trivial consequence of the definitions), and it proves in particular the properties of {\em parallel trap} described in 1. at the beginning of Section \ref{sec:formal}.
\begin{lemma}
The rectangle $\tilde{R}$ has aspect ratio smaller than $3$, and it satisfies $\mathrm{vol}(\tilde{R}) \leq 0.95 \ \mathrm{vol}(R)$. Furthermore if $(R,x)$ satisfies $P_0$, then $(\tilde{R}, \tilde{x})$ satisfies $P_{\epsilon}$.
\end{lemma}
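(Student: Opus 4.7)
The plan is to verify the three assertions branch by branch on the three possible outputs of the routine: (A) $\tilde R = [r/3,s]\times[0,r]$ with $\tilde x = x$ (triggered when $f(x) \leq f(\bar x)$); (B) $\tilde R = [r/6,s]\times[0,r]$ with $\tilde x = \bar x \in F$; and (C) $\tilde R = [0,r/3]\times[0,r]$ with $\tilde x = \bar x \in E$. The aspect-ratio and volume bounds reduce to one-line arithmetic: the three side-length pairs are $(s-r/3,r)$, $(s-r/6,r)$, $(r/3,r)$, giving aspect ratios at most $8/3$, $17/6$, and $3$ via $s\in[r,3r]$; the removed strip has area $r^2/3$, $r^2/6$, or $sr - r^2/3$, and since $sr \leq 3r^2$ the first two cases give $\mathrm{vol}(\tilde R) \leq (17/18)\,\mathrm{vol}(R) < 0.95\,\mathrm{vol}(R)$, while Case C gives $\mathrm{vol}(\tilde R) = r^2/3 \leq \mathrm{vol}(R)/3$.

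The substantive claim is $P_\epsilon$ for $(\tilde R, \tilde x)$. With the chosen net-scale $\delta := \sqrt{r\epsilon}$, the key numerical relation is
\[
\frac{\delta^2}{8} \;=\; \frac{r\epsilon}{8} \;<\; \frac{r\epsilon}{6} \;=\; \epsilon \cdot \frac{r}{6},
\]
so any face lying at distance $\geq r/6$ from the new pivot will have its discretization penalty strictly absorbed by the $P_\epsilon$ slack. I will partition the faces of $\tilde R$ into the single \emph{new} face (namely $F$ in Cases A and C, or $E$ in Case B) and the three \emph{inherited} faces (contained in $\partial R$). For the new face, the definition of $\bar x$ as the minimizer of $f$ over the queried net, together with the case test in Case A, gives
\[
f(\tilde x) \;\leq\; \min_{y \in N \cap \text{new face}} f(y) \;\leq\; f^*_\delta(\text{new face}),
\]
using that $N$ restricted to either $E$ or $F$ is a $\delta$-net of that segment. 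Combined with $\mathrm{dist}(\tilde x, \text{new face}) \geq r/6$ — inherited from $x^1 \geq r/2$ in Case A, and from the parallel separation $|r/3 - r/6| = r/6$ between $E$ and $F$ in Cases B and C — this yields $P_\epsilon$ for the new face directly.

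For each inherited face $E'$, $P_0$ holds with pivot $x$ by assumption, so $f(x) < f^*_{\delta'}(E') - (\delta')^2/8$ for some $\delta' > 0$. All that is needed is $f(\tilde x) \leq f(x)$, which holds trivially in Case A and is forced by the case rule in Cases B and C (the $\tilde x = \bar x$ branch is only taken when $f(\bar x) < f(x)$). The inherited $P_0$ inequality then transfers verbatim to the new pivot, so $(E', \tilde x)$ still satisfies $P_0 \subset P_\epsilon$. The only spot that requires care is exactly this pivot-monotonicity bookkeeping and making sure the net-minimum is compared to the correct face-restricted net; aside from that, the proof is a tight budget match between the discretization error $\delta^2/8$, the $P_\epsilon$ slack $\epsilon \cdot \mathrm{dist}$, and the geometric gap $r/6$ between the pivot and the trap boundary.
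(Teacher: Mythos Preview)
Your proof is correct and follows essentially the same approach as the paper's. The paper dismisses the aspect-ratio and volume claims as ``trivial to verify'' while you carry out the arithmetic explicitly; for the $P_\epsilon$ claim, both arguments partition the faces of $\tilde R$ into the single new edge (handled via $f(\tilde x)\le f^*_\delta(\tilde E)$ together with $\mathrm{dist}(\tilde x,\tilde E)\ge r/6 > \delta^2/(8\epsilon)$) and the inherited edges (handled via $f(\tilde x)\le f(x)$ and the assumed $P_0$), exactly as you do.
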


\begin{proof}
The first sentence is trivial to verify. For the second sentence, first note that for any edge $E$ of $R$ one has $P_0$ for $(E,\tilde{x})$ since by assumption one has $P_0$ for $(E,x)$ and furthermore $f(\tilde{x}) \leq f(x)$. Next observe that $\tilde{R}$ has at most one new edge $\tilde{E}$ with respect to $R$, and this edge is at distance at least $r/6$ from $\tilde{x}$, thus in particular one has $\epsilon \cdot \mathrm{dist}(\tilde{x}, \tilde{E}) - \delta^2 / 8 > 0$ for $\delta=\sqrt{r \epsilon}$. Furthermore by definition $f(\tilde{x}) \leq f^*_{\delta}(\tilde{E})$, and thus $f(\tilde{x}) < f^*_{\delta}(\tilde{E}) - \frac{\delta^2}{8} + \epsilon \cdot \mathrm{dist}(\tilde{x}, \tilde{E})$, or in other words $(\tilde{E},\tilde{x})$ satisfies $P_{\epsilon}$. 
\end{proof}

\begin{figure}[h]
	\includegraphics[width=\columnwidth]{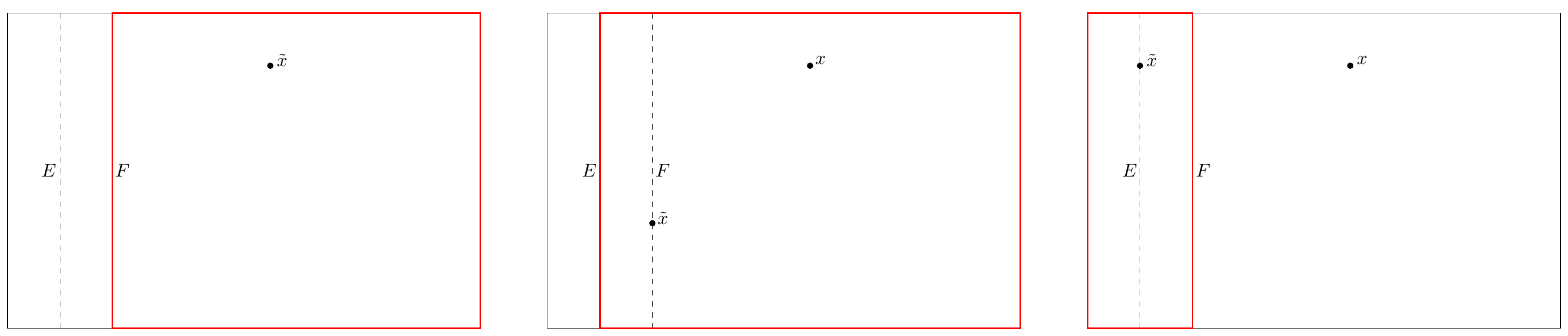}
	\caption{The three possible cases for $(\tilde{R},\tilde{x})$. $\tilde{R}$ is marked in red.}
	\label{fig:figure2}
\end{figure}
\subsection{Edge fixing} \label{sec:edgefixing}
Let $(R,x)$ be a domain satisfying $P_{\epsilon'}$ for some $\epsilon' \in [\epsilon, 2 \epsilon]$, and with some edges possibly also satisfying $P_0$. Denote $\mathcal{E}$ for the closest edge to $x$ that does not satisfy $P_0$, and let $r=\mathrm{dist}(x,\mathcal{E})$. We will consider three\footnote{We need three candidates to ensure that the domain will shrink.} candidate smaller rectangles, $R_1$, $R_2$ and $R_3$, as well as three candidate pivots (in addition to $x$) $x_1 \in \partial R_1$, $x_2 \in \partial R_2$ and $x_3 \in \partial R_3$. The rectangles are defined by $R_i = R \cap \{y : \|x_{i-1}-y\|_{\infty} \leq \frac{r}{3} \}$, where we set $x_0 = x$. The possible output $(\tilde{R}, \tilde{x})$ of {\em edge fixing} will be either $(R_i, x_{i-1})$ for some $i\in\{1,2,3\}$, or $(R, x_3)$ (see Figure \ref{fig:figure3} for a demonstration of how to construct the rectangles).

To guarantee the properties described in 2. at the beginning of Section \ref{sec:formal} we will prove the following: if the output is $(R_i, x_{i-1})$ for some $i$ then all edges will satisfy $P_{\left( 1 + \frac{1}{500 \log(1/\epsilon)} \right) \epsilon'}$ (Lemma \ref{lem:Ri} below) and the domain has shrunk (Lemma \ref{lem:shrunkdomain} below), and if the output is $(R,x_3)$ then one more edge satisfies $P_0$ compared to $(R,x)$ while all edges still satisfy at least $P_{\epsilon'}$ (Lemma \ref{lem:notRi} below).

\begin{lemma} \label{lem:shrunkdomain}
For any $i \in \{1,2,3\}$ one has $\mathrm{vol}(R_i) \leq \frac{2}{3} \mathrm{vol}(R)$. Furthermore if the aspect ratio of $R$ is smaller than $3$, then so is the aspect ratio of $R_i$.
\end{lemma}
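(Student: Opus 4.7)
The plan is to reduce to a clean coordinate description of $R$ and then analyze the sides of $R_i$ as functions of $r$ and of the position of $x_{i-1}$. Without loss of generality, I would assume $\mathcal{E}$ is a vertical edge of $R$ (the horizontal case being symmetric) and translate coordinates so that $R = [0,A] \times [0,B]$. Since $\mathcal{E}$ is vertical and is an edge of $R$, the perpendicular (horizontal) distance from $x$ to $\mathcal{E}$ is bounded by $A$, giving $r \leq A$.

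Writing $x_{i-1} = (u, v)$, the next step is to express
\[
R_i = \bigl[\max(0,\, u - r/3),\, \min(A,\, u + r/3)\bigr] \times \bigl[\max(0,\, v - r/3),\, \min(B,\, v + r/3)\bigr].
\]
The horizontal side of $R_i$ is then at most $2r/3 \leq 2A/3$, while the vertical side is trivially at most $B$. Multiplying gives $\mathrm{vol}(R_i) \leq (2A/3)\cdot B = (2/3)\,\mathrm{vol}(R)$, which is the volume bound. Note that this half of the lemma does not actually use the aspect-ratio hypothesis.

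For the aspect-ratio claim I would additionally invoke $A \leq 3B$ (from the hypothesis that $R$ has aspect ratio at most $3$), which together with $r \leq A$ yields $r/3 \leq \min(A,B)$. This is exactly the inequality needed to prevent the sides of $R_i$ from collapsing when $x_{i-1}$ lies near the boundary of $R$. A short case analysis on the position of $u$ relative to $[r/3,\, A - r/3]$ then shows that the horizontal side of $R_i$ is always at least $r/3$ (for instance, if $u < r/3$ the side equals $\min(A, u + r/3) \geq r/3$ because $r/3 \leq A$; the other two cases are analogous), and the same argument for $v$ using $r/3 \leq B$ handles the vertical side. Combined with the trivial upper bound $2r/3$ on each side, both sides of $R_i$ lie in $[r/3, 2r/3]$, so the aspect ratio of $R_i$ is at most $(2r/3)/(r/3) = 2 < 3$.

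The only point that genuinely requires care is the corner case in which $x_{i-1}$ sits close to a vertex of $R$: there the $\ell_\infty$ ball of radius $r/3$ around $x_{i-1}$ is clipped on two adjacent sides of $R$, and without the inequality $r/3 \leq \min(A,B)$ one of the sides of $R_i$ could drop below $r/3$ and spoil the aspect-ratio bound. Once that inequality is in hand, I do not foresee any genuine obstacle beyond routine bookkeeping in the case analysis.
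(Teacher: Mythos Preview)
Your proposal is correct and takes essentially the same route as the paper. Both arguments use that $r$ is bounded by the side of $R$ perpendicular to $\mathcal{E}$, combine this with the aspect-ratio hypothesis to obtain $r/3 \leq \min(\ell_1(R),\ell_2(R))$, and then conclude that each side of $R_i$ lies in $[r/3,\,2r/3]$, giving aspect ratio at most $2$; the paper simply asserts this last step without writing out the case analysis you sketch.
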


\begin{proof}
Let us denote $\ell_1(R)$ for the length of $R$ in the axis of $\mathcal{E}$ (the edge whose distance to $x$ defines $r$), and $\ell_2(R)$ for the length in the orthogonal direction (and similarly define $\ell_1(R_i)$ and $\ell_2(R_i)$).

Since $R_i \subset R$ one has $\ell_1(R_i) \leq \ell_1(R)$. Furthermore $\ell_2(R_i) \leq \frac{2}{3} r$ and $\ell_2(R) \geq r$, so that $\ell_2(R_i) \leq \frac{2}{3} \ell_2(R)$. This implies that $\mathrm{vol}(R_i) \leq \frac{2}{3} \mathrm{vol}(R)$.

For the second statement observe that $\ell_1(R) \geq \frac{\ell_2(R)}{3} \geq \frac{r}{3}$ (the first inequality is by assumption on the aspect ratio of $R$, the second inequality is by definition of $r$). Given this estimate, the construction of $R_i$ implies that $\frac{1}{3}r\leq \ell_2(R_i),\ell_1(R_i) \leq \frac{2}{3}r$, which concludes the fact that $R_i$ has aspect ratio smaller than $2$. 
\end{proof}

\paragraph{Queries and choice of output.}
The edge fixing algorithm queries a $\sqrt{\frac{\epsilon' r}{500\log(1/\epsilon)}}$-net on $\partial R_i$ for all $i \in \{1,2,3\}$ (thus a total of $4 \sqrt{\frac{500 r \log(1/\epsilon)}{\epsilon'}} \leq 90 \sqrt{\frac{r \log(1/\epsilon)}{\epsilon}}$ queries), and we define $x_i$ to be the best point found on each respective net.

If for all $i \in \{1,2,3\}$ one has  
\begin{equation} \label{eq:noimprov}
f(x_i) \leq f(x_{i-1}) - \frac{\epsilon' r}{3} \,,
\end{equation}
then we set $(\tilde{R},\tilde{x})=(R,x_3)$. Otherwise denote $i^* \in \{1,2,3\}$ for the smallest number which violates \eqref{eq:noimprov}, and set $(\tilde{R},\tilde{x})=(R_{i^*},x_{i^*-1})$.
\begin{figure}
	\centering
	\includegraphics[height = 6cm]{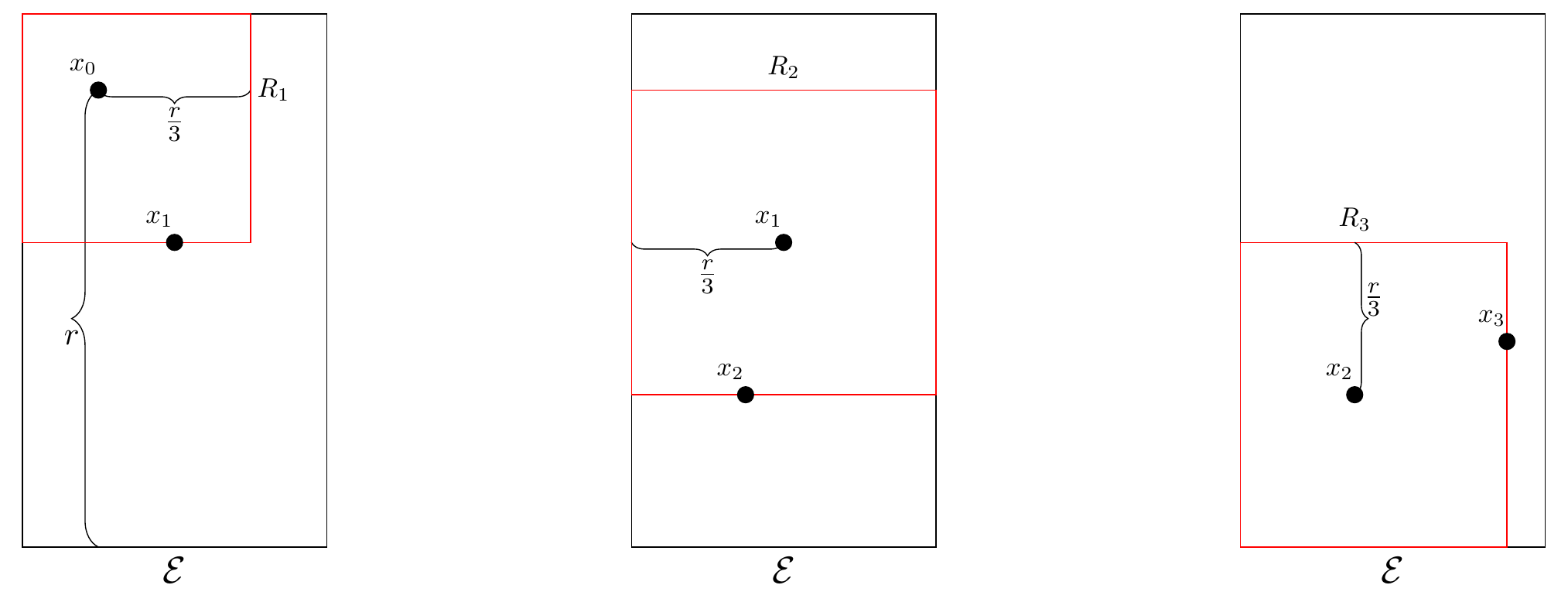}
	\caption{\emph{Edge fixing}: the rectangles $R_1, R_2$ and $R_3$ are marked in red, from left to right.}
	\label{fig:figure3}
\end{figure}
\begin{lemma} \label{lem:notRi}
If $(\tilde{R},\tilde{x})=(R,x_3)$ then $(\mathcal{E},x_3)$ satisfies $P_0$. Furthermore for any edge $E$ of $R$, if $(E,x)$ satisfies $P_0$ (respectively $P_{\epsilon'}$) then so does $(E,x_3)$.
\end{lemma}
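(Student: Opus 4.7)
The plan is to first extract a uniform function-value drop between $x$ and $x_3$. Because $(\tilde{R},\tilde{x})=(R,x_3)$, inequality \eqref{eq:noimprov} holds for every $i\in\{1,2,3\}$, and telescoping yields $f(x_3)\le f(x)-\epsilon' r$. This single estimate, together with the assumption that $(R,x)$ satisfies $P_{\epsilon'}$ and the axis-aligned geometry of $R$, drives both parts of the statement.

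For the first claim I would use $P_{\epsilon'}$ for $(\mathcal{E},x)$: there exists $\delta>0$ with $f(x)< f^*_\delta(\mathcal{E})-\tfrac{\delta^2}{8}+\epsilon'\cdot\dist(x,\mathcal{E})$. Since $\dist(x,\mathcal{E})=r$ by definition, subtracting $\epsilon' r$ from both sides and chaining with $f(x_3)\le f(x)-\epsilon' r$ gives $f(x_3)< f^*_\delta(\mathcal{E})-\tfrac{\delta^2}{8}$, which is precisely $P_0$ for the same $\delta$.

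For the second claim the $P_0$ case is immediate: the inequality defining $P_0$ does not involve the pivot's distance to $E$, and $f(x_3)\le f(x)$. The $P_{\epsilon'}$ case is the main work. Here the value drop $\epsilon' r$ must absorb the worst-case decrease $\epsilon'(\dist(x,E)-\dist(x_3,E))$ on the right-hand side of the $P_{\epsilon'}$ inequality. I would bound this loss by $\epsilon' r$ using two geometric facts: (a) each $x_i$ lies in $R_i\subset\{y:\|y-x_{i-1}\|_\infty\le r/3\}$, so three applications of the $\ell_\infty$ triangle inequality give $\|x_3-x\|_\infty\le r$; and (b) because $E$ is a full axis-aligned face of $R$ and both $x,x_3\in R$, the restriction of $\dist(\cdot,E)$ to $R$ reduces to the absolute value of a single coordinate difference, so $|\dist(x,E)-\dist(x_3,E)|\le\|x_3-x\|_\infty\le r$. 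Plugging these into the $P_{\epsilon'}$ inequality for $(E,x)$ with the same $\delta$ yields $f(x_3)<f^*_\delta(E)-\tfrac{\delta^2}{8}+\epsilon'\dist(x_3,E)$, as required.

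The main obstacle I anticipate is step (b): one has to observe that because $E$ is a full face of $R$ (not an arbitrary segment), the orthogonal projection of any point of $R$ onto the affine span of $E$ lands inside $E$, so $\dist(\cdot,E)$ collapses to a single-coordinate quantity on $R$. Once this is clear the rest is bookkeeping, and in fact it reveals why the improvement budget $\epsilon' r/3$ per failed step in \eqref{eq:noimprov} was chosen so that the cumulative value decrease $\epsilon' r$ exactly matches the worst-case loss $\epsilon'\|x_3-x\|_\infty\le\epsilon' r$ from moving the pivot.
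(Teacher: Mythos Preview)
Your proof is correct and mirrors the paper's argument: telescope \eqref{eq:noimprov} to obtain $f(x_3)\le f(x)-\epsilon' r$, then use this drop both to upgrade $P_{\epsilon'}$ on $\mathcal{E}$ to $P_0$ and to absorb the change in $\dist(\cdot,E)$ for the remaining edges. Your handling of step~(b) via $\|x_3-x\|_\infty\le r$ together with the axis-aligned structure of $E$ is in fact slightly tighter than the paper's, which invokes the ordinary triangle inequality with the claim $\|x_3-x_0\|_2\le r$ (whereas only $\|x_3-x_0\|_\infty\le r$ follows directly from the construction, so your coordinate-wise argument is the cleaner justification).
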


\begin{proof}
Since $(\tilde{R},\tilde{x})=(R,x_3)$ it means that $f(x_3) \leq  f(x_0) - \epsilon' r$. In particular since $(\mathcal{E},x_0)$ satisfies $P_{\epsilon'}$ one has $f(x_0) < f^*_{\delta}(\mathcal{E}) - \frac{\delta^2}{8} + \epsilon' r$, and thus now one has $f(x_3) < f^*_{\delta}(\mathcal{E}) - \frac{\delta^2}{8}$ which means that $(\mathcal{E},x_3)$ satisfies $P_0$.

Let us now turn to some other edge $E$ of $R$. Certainly if $(E,x_0)$ satisfies $P_0$ then so does $(E,x_3)$ since $f(x_3) \leq f(x_0)$. But, in fact, even $P_{\epsilon'}$ is preserved since by the triangle inequality (and $\|x_3 - x_0\|_2 \leq r$) one has
 \[
f(x_3) - \epsilon' \cdot \mathrm{dist}(x_3,E) \leq f(x_3) + \epsilon' r - \epsilon' \cdot \mathrm{dist}(x_0,E) \leq f(x_0) - \epsilon' \cdot \mathrm{dist}(x_0,E).
\]
\end{proof}

\begin{lemma} \label{lem:Ri}
If $(\tilde{R},\tilde{x})=(R_{i},x_{i-1})$ for some $i \in \{1,2,3\}$, then $(\tilde{R},\tilde{x})$ satisfy $P_{\left( 1 + \frac{1}{500 \log(1/\epsilon)} \right) \epsilon'}$.
\end{lemma}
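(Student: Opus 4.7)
The plan is to verify property $P_{\epsilon''}$ (writing $\epsilon'' = (1+\eta)\epsilon'$ with $\eta = \tfrac{1}{500\log(1/\epsilon)}$) separately on each edge of $\tilde R = R_{i^*}$ paired with the new pivot $\tilde x = x_{i^*-1}$. Two inputs drive the whole argument: since $i^*$ is the \emph{smallest} violator of \eqref{eq:noimprov}, telescoping the successful prior steps yields $f(x_{i^*-1}) \leq f(x) - (i^*-1)\epsilon' r/3$; and since $x_j$ lies in the $\ell_\infty$-ball of radius $r/3$ around $x_{j-1}$ for each $1 \leq j \leq i^*-1$, the pivot has moved at most $(i^*-1) r/3$ in $\ell_\infty$. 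The edges of $R_{i^*}$ split into \emph{new} ones (contained in the boundary of the $\ell_\infty$-ball of radius $r/3$ around $x_{i^*-1}$) and \emph{inherited} ones (sub-segments of edges of $R$), which I handle separately.

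For a new edge $E'$ one has $\dist(x_{i^*-1}, E') = r/3$ by construction, and the queried net on $\partial R_{i^*}$ (of scale $\delta = \sqrt{\epsilon' r/(500\log(1/\epsilon))}$) contains a $\delta$-net of $E'$, so $f(x_{i^*}) \leq f_\delta^*(E')$. Combined with the $i^*$-violation bound $f(x_{i^*-1}) < f(x_{i^*}) + \epsilon' r/3$, the desired $P_{\epsilon''}$ for $(E', \tilde x)$ reduces to the single arithmetic inequality $\delta^2/8 \leq \eta \epsilon' r/3$, which is straightforward from the chosen values of $\delta$ and $\eta$.

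For an inherited edge $E' \subset E$, if $(E, x)$ satisfies $P_0$ then $f(x_{i^*-1}) \leq f(x)$ immediately gives $P_0$ for $(E, x_{i^*-1})$, which descends to the sub-segment $E'$ since $\min_{E'} f \geq \min_E f$. The delicate case, which I expect to be the main obstacle, is when $(E, x)$ satisfies only $P_{\epsilon'}$. Here the key geometric fact is that $E$ is axis-aligned and both $x, x_{i^*-1} \in R$, so the $\ell_2$-distance to $E$ reduces to a single-coordinate displacement, giving $\dist(x, E) - \dist(x_{i^*-1}, E) \leq \|x - x_{i^*-1}\|_\infty \leq (i^*-1) r/3$. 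Substituting this and the telescope into the $P_{\epsilon'}$ inequality for $(E, x)$, the at most $(i^*-1)\epsilon' r/3$ increase in the $\epsilon' \cdot \dist$ term is \emph{exactly} cancelled by the $(i^*-1)\epsilon' r/3$ drop in function value, so $(E, x_{i^*-1})$ already satisfies $P_{\epsilon'} \subset P_{\epsilon''}$; in particular no $\eta$-loss is needed on inherited edges, and the upgrade to $\epsilon''$ is forced only by the discretization error on new edges.

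A minor bookkeeping point is the passage from $E$ to a sub-segment $E' \subset E$: by Lemma \ref{lem:discretizationerror}, $P_c$ is equivalent to the strict inequality $f(x) < \min_{y \in E} f(y) + c \cdot \dist(x, E)$, and both $\min_{E'} f$ and $\dist(x_{i^*-1}, E')$ weakly increase upon restricting to $E' \subset E$, preserving strictness.
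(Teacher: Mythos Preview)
Your argument is correct, and for the ``new'' edges (those at $\ell_\infty$-distance exactly $r/3$ from $x_{i^*-1}$) it coincides with the paper's. For inherited edges, however, you take a longer route than necessary. The paper observes that the case you label ``delicate'' --- an inherited edge $E$ of $R$ that satisfies only $P_{\epsilon'}$ and not $P_0$ --- simply cannot occur. Indeed, an inherited edge of $R_{i^*}$ lies at perpendicular distance at most $r/3$ from $x_{i^*-1}$, hence at most $r/3 + (i^*-1)r/3 \leq r$ from $x_0$; but $r$ was \emph{defined} as the distance from $x_0$ to the closest edge of $R$ \emph{not} satisfying $P_0$, so any edge of $R$ strictly closer than $r$ already satisfies $P_0$, and one at distance exactly $r$ coincides with a face of the $\ell_\infty$-ball and is handled by the ``new edge'' case. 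Thus the paper needs only the trivial $f(x_{i^*-1}) \leq f(x_0)$ to propagate $P_0$ on inherited edges, bypassing your telescope/displacement cancellation entirely.

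Your telescoping argument is nonetheless correct and self-contained: it does not appeal to the definition of $r$ and shows directly that the function-value drop exactly offsets the pivot displacement in the $P_{\epsilon'}$ inequality. This buys a small amount of robustness (the argument would survive a different choice of $r$), at the cost of an extra paragraph that the paper's structural observation renders unnecessary.
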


\begin{proof}
By construction, if $(\tilde{R},\tilde{x})=(R_{i},x_{i-1})$, then for any edge $E$ of $R_i$ one has $f(x_{i-1}) < f^*_{\delta}(E) + \frac{\epsilon' r}{3}$. Furthermore one has $\frac{\epsilon' r}{3} = - \frac{\epsilon' r}{8 \times 500 \log(1/\epsilon)} + \left(1+ \frac{3}{8 \times 500 \log(1/\epsilon)} \right) \frac{\epsilon' r}{3}$, and thus one has $P_{\left(1+ \frac{3}{8 \times 500 \log(1/\epsilon)} \right) \epsilon'}$ for $(E, x_{i-1})$ whenever $\mathrm{dist}(x_{i-1}, E) = \frac{r}{3}$. Indeed, since $\delta = \sqrt{\frac{\epsilon' r}{500\log(1/\epsilon)}}$,
\[
f(x_{i-1}) < f^*_{\delta}(E)-\frac{\delta^2}{8} + \left(1+ \frac{3}{8 \times 500 \log(1/\epsilon)} \right) \epsilon'\cdot\mathrm{dist}\left(x_{i-1},E\right).
\]
If $\mathrm{dist}(x_{i-1}, E) < \frac{r}{3}$ then by the triangle inequality, $\mathrm{dist}(x_0,E) < r$, and moreover $E$ is also an edge with respect to $R$. Thus from the definition of $r$, $(E,x_0)$ satisfies $P_0$. Also by our choice of $x_{i-1}$, we know that $f(x_{i-1}) \leq f(x_0)$. Hence $(E,x_{i-1})$ satisfies $P_0$ as well. 
\end{proof}

\subsection{Generalization to higher dimensions} \label{sec:highdim}
As explained in the introduction, there is no reason to restrict GFT to $[0,1]^2$ and, in fact, the algorithm may be readily adapted to higher-dimensional spaces, such as $[0,1]^d$, for some $d > 2$. We now detail the necessary changes and derive the complexity announced in Theorem \ref{thm:highdim}. 

First, if $F$ is an affine hyperplane, and $x \in [0,1]^d$, we define $P_c$ for $(F,x)$ in the obvious way (i.e., same definition except that we consider a $\delta$-net of $F$). Similarly for $(R,x)$, when $R$ is an axis-aligned hyperrectangle. 

Gradient flow trapping in higher dimensions replaces every line by a hyperplane, and every rectangle by a hyperrectangle. In particular at each step GFT maintains a domain $(R,x)$, where $R$ is a hyperrectangle with aspect ratio bounded by 3, and $x \in R$. The two subroutines are adapted as follows:
\begin{enumerate}
	\item {\em Parallel trap} works exactly in the same way, except that the two lines $E$ and $F$ are replaced by two corresponding affine hyperplanes. In particular the query cost of this step is now $O\left(\left(\frac{\mathrm{diam}(R)}{\epsilon}\right)^\frac{d-1}{2}\right)$, and the volume shrinks by at least $0.95$.
	\item In {\em edge fixing}, we now have three hyperrectangles $R_i$, and we need to query nets on their $2d$ faces. Thus the total cost of this step is  $O \left( d\left(\frac{\mathrm{diam(R)}\log(1/\epsilon)}{\epsilon}\right)^\frac{d-1}{2} \right)$.
	Moreover, suppose that domain does not shrink at the end of this step and the output is a domain $(R,\tilde{x})$ for some other $\tilde{x} \in R$. In this case we know that $R$ has some face $F$, such that $(F,x)$ did not satisfy $P_0$, but $(F,\tilde{x})$ does satisfy $P_0$. It follows that we can run {\em edge fixing}, at most $2d$ times before the domain shrinks.
\end{enumerate}
We can now analyze the complexity of the high-dimensional version of GFT:

\begin{proof}[Of Theorem \ref{thm:highdim}]
First observe that, if $R$ is a hyperrectangle in $[0,1]^d$ with aspect ratio bounded by $3$, then we have the following inequality,
$$\mathrm{diam}(R) \leq 3\sqrt{d}\cdot\mathrm{vol}(R)^{\frac{1}{d}}.$$
By repeating the same calculations done in Lemma \ref{lem:proof1} and the observation about \emph{parallel trap} and \emph{edge fixing} made above, we see that the domain shrinks at least once in every $2d + 1$ steps, so that at step $T$,
$$\mathrm{vol}(R_T) \leq 0.95^{\frac{T}{2d+1}},$$
and 
$$\mathrm{diam}(R_T) \leq 3\sqrt{d}\cdot 0.95^{\frac{T}{(2d+1)d}}.$$
Since the algorithm stops when $\mathrm{diam}(R_T) \leq 2\epsilon$, we get
$$ T = O\left(d^2\log\left(\frac{d}{\epsilon}\right)\right).$$
The total work done by the algorithm is evident now by considering the number of queries at each step.
\end{proof}

\section{Lower bound for randomized algorithms} \label{sec:LB}
In this section, we show that any randomized algorithm must make at least $\tilde{\Omega}\left(\frac{1}{\sqrt{\epsilon}}\right)$ queries in order to find an $\epsilon$-stationary point. This extends the lower bound in \citep{Vav93}, which applied only to deterministic algorithms. In particular, it shows that, up to logarithmic factors, adding randomness cannot improve the algorithm described in the previous section.
\newline

For an algorithm $\mathcal{A}$, a function $f: [0,1]^2\to \R$  and $\epsilon >0$ we denote by $\mathcal{Q}\left(\mathcal{A},f,\epsilon\right)$ the number of queries made by $\mathcal{A}$, in order to find an $\epsilon$-stationary point of $f$. Our goal is to bound from below
$$\mathcal{Q}_{\mathrm{rand}}(\epsilon):=\inf\limits_{\mathcal{A} \text{ random} }\sup\limits_f\E_\mathcal{A}\left[\mathcal{Q}\left(\mathcal{A},f,\epsilon\right)\right],$$
where the infimum is taken over all random algorithms and the supremum is taken over all smooth functions, $f$. The expectation is with respect to the randomness of $\mathcal{A}$. By Yao's minimax principle we have the equality
$$\mathcal{Q}_{\mathrm{rand}}(\epsilon) = \sup\limits_{\mathcal{D}}\inf\limits_{\mathcal{A} \text{ determinstic}}\E_{f\sim\mathcal{ D}}\left[\mathcal{Q}\left(\mathcal{A},f,\epsilon\right)\right].$$
Here, $\mathcal{A}$ is a deterministic algorithm and $\mathcal{D}$ is a distribution over smooth functions.
The rest of this section is devoted to proving the following theorem:
\begin{theorem} \label{thm: lower bound}
	Let $h:\N \to \R$ be a decreasing function such that
	$$\sum\limits_{k=1}^\infty \frac{h(k)}{k}< \infty,$$
	and set 
	\begin{equation} \label{eq: def sum}
	S_h(n):= \sum\limits_{k=1}^n \frac{1}{k\cdot h(k)}.
	\end{equation}
	Then,
	$$\mathcal{Q}_\mathrm{rand}(\epsilon)=\Omega\left(\frac{1}{\sqrt{\epsilon}\cdot S_h\left(\left\lceil\frac{1}{\sqrt{\epsilon}}\right\rceil\right)}\right).$$
\end{theorem}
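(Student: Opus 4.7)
The plan is to invoke Yao's minimax principle (already stated in the excerpt) and exhibit a distribution $\mathcal{D}$ over $1$-smooth functions on $[0,1]^2$ whose only $\epsilon$-approximate stationary point is hidden at the endpoint of a random path. The template is Vavasis' deterministic lower bound, in which the adversary builds a planar ``maze'' that any deterministic algorithm must essentially traverse; here the maze is the trace of a random \emph{unpredictable walk} of the kind constructed in \citep{BPP98}. Concretely, setting $N=\lceil 1/\sqrt{\epsilon}\,\rceil$, I would fix a grid in $[0,1]^2$ with spacing of order $\sqrt{\epsilon}$ and sample an unpredictable walk $(W_0,W_1,\dots,W_N)$ on this grid whose one-step unpredictability decays like $h$, in the sense that the conditional law of $W_{k+1}$ given $(W_0,\dots,W_k)$ puts mass at most $O(h(k))$ on any single neighboring site. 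The hypothesis $\sum_k h(k)/k<\infty$ is precisely the condition under which such walks can be realized on a planar grid according to \citep{BPP98}. For each realization of $W$, I define a $1$-smooth function $f_W$ that funnels every gradient flow line into a $\sqrt{\epsilon}$-tube around the trace of the walk and then advances monotonically inside the tube towards $W_N$, so that $W_N$ is the unique $\epsilon$-stationary point; the function is designed so that outside a $\sqrt{\epsilon}$-neighborhood of the trace, $f_W$ does not depend on $W$, and queries there convey no information about the walk.

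The next step is to argue that any deterministic algorithm needs a large number of queries to locate $W_N$ under this distribution. I reveal the walk to the algorithm progressively: say the algorithm has identified the \emph{frontier} $W_k$ if queries so far have pinned the trace down to time $k$, meaning each of $W_0,\dots,W_k$ is within $\sqrt{\epsilon}$ of some past query. A query at distance larger than $\sqrt{\epsilon}$ from the current frontier returns a value that depends only on the already-revealed prefix and on the deterministic ``background'' part of $f_W$, hence cannot advance the frontier. By the unpredictability assumption, any single query has conditional probability at most $O(h(k))$ of lying within $\sqrt{\epsilon}$ of $W_{k+1}$, so a standard stopping-time argument shows that the expected number of queries needed to push the frontier from step $k$ to step $k+1$ is $\Omega(1/h(k))$.

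The delicate step, which I expect to be the main technical obstacle, is the amortized accounting that produces the weighted sum $S_h(N)$ rather than the naive total $\sum 1/h(k)$. The point is that a single well-placed query near the current frontier can, by continuity of the walk, simultaneously disambiguate the walk at several time indices --- heuristically, the last $\Theta(k)$ of them, because consistent continuations of a $k$-step walk form a set whose ``width'' in time grows proportionally to $k$. I would encode this via a potential $\Phi_t$ that counts the time indices not yet determined by the first $t$ queries, weighted by a factor $1/k$ at frontier $k$ to reflect this cross-time reuse; the unpredictability estimate then yields a drift inequality of the form
\[
\E\!\left[\Phi_{t+1}-\Phi_t \,\big|\, \cF_t\right] \;\geq\; -\,\frac{C}{k_t\, h(k_t)},
\]
where $k_t$ is the frontier index after $t$ queries and $\cF_t$ is the history. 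Summing from $\Phi_0 = \Theta(N)$ down to $\Phi_Q = 0$ when the algorithm halts, and using $S_h(N) = \sum_{k=1}^N 1/(k\,h(k))$, gives $Q \cdot \frac{S_h(N)}{N} \gtrsim 1$, i.e.\ $Q = \Omega(N/S_h(N))$, which is precisely the bound claimed for $\mathcal{Q}_\mathrm{rand}(\epsilon)$.

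The three places where care is required are: (i) constructing $f_W$ so that gradient flow honestly follows the discrete walk without introducing spurious $\epsilon$-stationary points at ``corners'' where the walk turns; (ii) making precise the cross-time reuse so that the $1/k$ weight in the potential is correct rather than just a constant; and (iii) citing the right structural property from \citep{BPP98}, namely that the summability $\sum h(k)/k<\infty$ both allows the walk to exist on a planar grid and gives the ``one query disambiguates many adjacent indices'' phenomenon needed for the amortization.
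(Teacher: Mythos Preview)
Your high-level plan---Yao's principle, Vavasis' path-to-smooth-function reduction, and a random path coming from the unpredictable-walk machinery of \citep{BPP98}---matches the paper's. But the core of the argument, the discrete lower bound for locating the endpoint of a random monotone path, is where you diverge, and your version has a genuine gap.

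First, your formulation of unpredictability is wrong. You write that ``the conditional law of $W_{k+1}$ given $(W_0,\dots,W_k)$ puts mass at most $O(h(k))$ on any single neighboring site.'' For a monotone lattice walk there are only two neighboring sites, so no one-step transition probability can decay to zero; your subsequent claim that advancing the frontier from $k$ to $k+1$ costs $\Omega(1/h(k))$ queries in expectation therefore cannot hold. The actual unpredictability statement (from H\"aggstr\"om--Mossel, extending \citep{BPP98}) is a \emph{multi-step} bound: conditional on the path up to time $m-k$, the law of $X_m$ has mass at most $c_h/(k\,h(k))$ at any single vertex. This is what the summability hypothesis $\sum h(k)/k<\infty$ buys. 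With the correct statement, your frontier/potential scheme no longer has an obvious drift estimate, and the heuristic that ``one query disambiguates $\Theta(k)$ time indices'' is doing real work you have not supplied.

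The paper does not attempt a direct potential argument. Instead it invokes Aaronson's relational adversary method: one defines a symmetric weight $w_h(P,P')$ proportional to $\Pr(X=P)\sum_{i}\Pr(X=P'\mid X[i]=P[i])$, and then the multi-step unpredictability bound is exactly what is needed to show (i) the total weight $T(w_h,P)$ is $\gtrsim n\cdot\Pr(X=P)$ and (ii) the ``bad'' weight $T(w_h,P,v)$ through any off-path vertex $v$ is $\lesssim S_h(n)\cdot\Pr(X=P)$. The ratio $n/S_h(n)$ then drops out of Aaronson's theorem directly. The sum $S_h(n)=\sum_{k\le n}1/(k\,h(k))$ appears not through an amortized potential but simply by summing the unpredictability bound $c_h/(k\,h(k))$ over the possible prefix lengths. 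If you want to repair your approach, you would need to replace the one-step frontier argument by something that genuinely exploits the $k$-step spreading; the adversary-method route is the clean way to do this.
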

Remark that one may take $h(k):= \frac{1}{\log(k)^2+1}$ in the theorem. In this case $S_h(k) = O(\log^3(k))$, and
$\mathcal{Q}_\mathrm{rand}(\epsilon)=\Omega\left(\frac{1}{\log^3(1/\epsilon) \sqrt{\epsilon} }\right)$, which is the announced lower bound.\\

One of the main tools utilized in our proof is the construction introduced in \citep{Vav93}. We now present the relevant details.
\subsection{A reduction to monotone path functions}
Let $G_n = (V_n,E_n)$ stand for the $n+1 \times n+1$ grid graph. That is, 
$$V_n = \{0,\hdots, n\} \times \{0,\hdots, n\} \text{ and } E_n =\{(v,u)\in V_n\times V_n : \|v-u\|_1 = 1\}.$$ We say that a sequence of vertices, $(v_0,...,v_n)$ is a \emph{monotone path} in $G_n$ if $v_0 = (0,0)$ and for every $0<i\leq n$, $v_i - v_{i-1}$ either equals $(0,1)$ or $(1,0)$. In other words, the path starts at the origin and continues each step by either going right or up. 
If $(v_0,...,v_n)$ is a monotone path, we associate to it a \emph{monotone path function} $P:V_n \to \R$ by
$$P(v) = \begin{cases}
-\|v\|_1& \text{if } v\in \{v_0,...,v_n\}\\
\hfill \|v\|_1& \text{otherwise}
\end{cases}.$$ 
By a slight abuse of notation, we will sometimes refer to the path function and the path itself as the same entity. If $i = 0,...,n$ we write $P_i$ for $P^{-1}(-i)$ and $P[i]$ for the prefix $(P_0,P_1,...,P_{i})$. If $v \in V_n$ is such that $P(v) > 0$, we say that $v$ does not lie on the path.\\
We denote the set of all monotone path functions on $G_n$ by
$\mathrm{F}_n.$
It is clear that if $P \in \mathrm{F}_n$ then $P_n$ is the only local minimum of $P$ and hence the global minimum. \\

Informally, the main construction in \citep{Vav93} shows that for every $P \in \mathrm{F}_n$ there is a corresponding smooth function $\hat{P}:[0,1]^2\to \R$, which 'traces' the path in $P$ and preserves its structure. In particular, finding an $\epsilon$-stationary point of $\hat{P}$ is not easier than finding the minimum of $P$.\\

To formally state the result we fix $\epsilon >0$ and assume for simplicity that $\frac{1}{\sqrt{\epsilon}}$ is an integer. We henceforth denote $n(\epsilon) := \frac{1}{\sqrt{\epsilon}}$ and identify $V_{n(\epsilon)}$ with $[0,1]^2$ in the following way: if $(i,j) = v\in V_{n(\epsilon)}$ we write $\mathrm{square}(v)$ for the square:
$$\mathrm{square}(v) = \left[\frac{i}{n(\epsilon) + 1}, \frac{i+1}{n(\epsilon) + 1}\right] \times \left[\frac{j}{n(\epsilon) + 1}, \frac{j+1}{n(\epsilon) + 1}\right].$$
If $\varphi: [0,1]^2 \to \R$, then $\mathrm{supp}(\varphi)$ denotes the closure of the set $\{x \in[0,1]^2 : \varphi(x) \neq 0\}$.
\begin{lemma}[Section 3, \citep{Vav93}] \label{lem: vavsis}
	Let $P \in \mathrm{F}_{n(\epsilon)}$. Then there exists a function $\hat{P}:[0,1]^2\to\R$ with the following properties:
	\begin{enumerate}
		\item $\hat{P}$ is smooth.
		\item $\hat{P} = f_P + \ell$, where $\ell$ is a linear function, which does not depend on $P$, and 
		$$\mathrm{supp}(f_P)\subset \bigcup\limits_{i=0}^n\mathrm{square}\left(P_i\right).$$
		\item If $x \in [0,1]^2$ is an $\epsilon$-stationary point of $\hat{P}$ then $x \in \mathrm{square}\left(P_n\right)$.
		\item if $P' \in \mathrm{F}_{n(\epsilon)}$ is another function and for some $i = 0,...,n$, $(P'_{i-1},P'_i,P'_{i+1}) = (P_{i-1},P_i,P_{i+1})$. Then
		$$\hat{P}'\vert_{\mathrm{square}\left(P_i\right)} = \hat{P}\vert_{\mathrm{square}\left(P_i\right)}$$
	\end{enumerate}
\end{lemma}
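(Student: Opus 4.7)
The plan is to reproduce the explicit construction of \citep{Vav93}. I would take $\ell(x,y) = -c\,\epsilon\,(x+y)$ with a constant $c>1$ chosen so that $\|\nabla \ell\|_2 > \epsilon$ on all of $[0,1]^2$; this alone guarantees that no square where $f_P$ vanishes identically contains an $\epsilon$-stationary point, and so it covers property (3) off the path. Then I would write $f_P = \sum_{i=0}^n \varphi_{P,i}$, where each $\varphi_{P,i}$ is a smooth bump whose closed support is contained in $\mathrm{square}(P_i)$ (together with the closed edges shared with $\mathrm{square}(P_{i\pm 1})$, which already belong to $\bigcup_i \mathrm{square}(P_i)$). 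Property (2) is then baked into the support restriction, and property (4) will follow because I will define $\varphi_{P,i}$ to depend only on the triple $(P_{i-1},P_i,P_{i+1})$.

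After rescaling each path square to unit size, there are only four combinatorial types of templates, up to symmetries of the square: straight-through, left-turn, right-turn, and the terminal sink at $P_n$. For each non-terminal type I would write down an explicit $C^\infty$ bump of $C^2$-norm $O(\epsilon)$ whose gradient, added to the restriction of $\nabla\ell$ to the square, creates a trough of depth $\Theta(\epsilon)$ that steers the flow of $\ell+\varphi_{P,i}$ from the entry edge to the exit edge while keeping $\|\nabla(\ell+\varphi_{P,i})\|_2 > \epsilon$ throughout. A concrete implementation is a smoothly interpolated quadratic, or a product of one-dimensional mollified step functions. The terminal template instead places a strict quadratic minimum inside $\mathrm{square}(P_n)$, so that this is the unique region where $\|\nabla\hat{P}\|_2\leq\epsilon$, completing property (3). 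The $C^2$-norm bound $O(\epsilon)$ combined with square side length $\Theta(\sqrt\epsilon)$ yields a gradient-Lipschitz constant of $O(1)$ on each square; tracking constants, one can push it below $1$ and thereby recover property (1).

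The main obstacle is the global $C^{1,1}$ gluing. At an interface between a path square and a non-path square, the template must vanish with zero gradient; I would arrange this by taking each template to be identically zero on a thin collar of the non-path portion of its square's boundary. At an interface between two adjacent path squares the two templates must match in both value and normal derivative along the shared edge, even though one template ``sees'' the triple $(P_{i-1},P_i,P_{i+1})$ and the other sees $(P_i,P_{i+1},P_{i+2})$. The fix is to pin down in advance a single universal one-dimensional boundary profile $\psi$ on the unit interval, and require the trace of every template on each of its entry and exit edges (together with the matching normal derivative) to equal $\psi$, independently of the turn type. Once such a $\psi$ together with the four explicit templates is exhibited, all four conclusions of the lemma follow; this is exactly the content of Section 3 of \citep{Vav93}.
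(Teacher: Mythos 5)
The paper does not prove this lemma at all: it is stated with the explicit attribution ``[Section 3, \citep{Vav93}]'' and used as an imported black box in the lower-bound argument, so there is no proof in the paper to compare your proposal against.

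Taken on its own terms, your sketch captures the right high-level ingredients of Vavasis's construction --- a fixed linear tilt $\ell$, bumps $\varphi_{P,i}$ supported in path squares that depend only on the local triple $(P_{i-1},P_i,P_{i+1})$ (which yields properties 2 and 4), finitely many templates up to symmetry, a terminal sink in $\mathrm{square}(P_n)$, and $C^{1,1}$ gluing across shared edges. But it is a plan rather than a proof: the templates and the ``universal boundary profile $\psi$'' are asserted to exist rather than exhibited, and you close by deferring to Section 3 of \citep{Vav93}, which is candid but circular. Two places where the gap is real and not merely stylistic: (i) property 3 concerns $\epsilon$-stationarity in the \emph{projected}-gradient sense on $[0,1]^2$, and off the path one has $\hat P = \ell$; any nonconstant linear $\ell$ has a projected-gradient zero at some corner of the square, and since a monotone path of length $n$ ends on the anti-diagonal of the grid (never at $(n,n)$), that corner is generically off the path, so ruling out a spurious boundary stationary point requires an argument your sketch never gives; (ii) for the gluing, a single one-dimensional value profile cannot by itself enforce $C^{1,1}$ matching --- one must coordinate both the value and the normal-derivative traces of $\hat P = \ell + f_P$ across each shared edge, and since the restriction of $\ell$ to an edge is nonconstant and differs between horizontal and vertical edges, the template interface conditions must be stated relative to $\ell$, not in isolation. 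These are exactly the details that an actual proof of this lemma, as opposed to a pointer to \citep{Vav93}, would need to settle.
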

We now make precise of the fact that finding the minimum of $P$ is as hard as finding an $\epsilon$-stationary point of $\hat{P}$. For this we define $\mathcal{G}(\mathcal{A},P)$, the number of queries made by algorithm $\mathcal{A}$, in order to find the minimal value of the function $P$. 
\begin{lemma} \label{lem: reduction}
	For any algorithm $\mathcal{A}$, which finds an $\epsilon$-stationary point of smooth functions on $[0,1]^2$, there exists an algorithm $\tilde{\mathcal{A}}$ such that
	$$\mathcal{Q}(\mathcal{A}, \hat{P},\epsilon) \geq \frac{1}{5} \mathcal{G}(\tilde{\mathcal{A}},P),$$
	for any $P \in \mathrm{F}_{n(\epsilon)}.$
\end{lemma}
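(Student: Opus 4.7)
The plan is to construct $\tilde{\mathcal{A}}$ as a faithful simulator of $\mathcal{A}$ on $\hat{P}$, in which each oracle call made by $\mathcal{A}$ to $\hat{P}$ is answered using at most five oracle calls to $P$. The key enabler is item 4 of Lemma \ref{lem: vavsis}: the restriction of $\hat{P}$ to $\mathrm{square}(P_i)$ depends only on the three consecutive path vertices $(P_{i-1},P_i,P_{i+1})$, so constant-size local information about $P$ suffices to answer any value query on $\hat{P}$.

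Concretely, $\tilde{\mathcal{A}}$ will run $\mathcal{A}$ step by step. When $\mathcal{A}$ queries some $x \in [0,1]^2$, $\tilde{\mathcal{A}}$ locates the unique vertex $v \in V_{n(\epsilon)}$ with $x \in \mathrm{square}(v)$ (breaking ties on shared square-boundaries by a fixed convention, which is harmless since $\hat{P}$ is smooth and in particular continuous) and queries $P(v)$. If $P(v) > 0$, then $v$ is off the path, so by item 2 of Lemma \ref{lem: vavsis} we have $f_P \equiv 0$ on $\mathrm{square}(v)$ and $\hat{P}(x) = \ell(x)$, which $\tilde{\mathcal{A}}$ computes for free since $\ell$ does not depend on $P$. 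If $P(v) < 0$, then $v = P_i$ with $i = -P(v)$; $\tilde{\mathcal{A}}$ queries $P$ at the (at most four) grid-neighbors of $v$ and recognizes $P_{i-1}$ and $P_{i+1}$ as the two neighbors with negative $P$-value (with the obvious degeneration when $i\in\{0,n\}$, which only lowers the count). Item 4 then permits $\tilde{\mathcal{A}}$ to reconstruct $\hat{P}$ on $\mathrm{square}(v)$ exactly and return the requested value to $\mathcal{A}$. The total cost per simulated query is at most $1+4=5$.

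For termination, once $\mathcal{A}$ outputs an $\epsilon$-stationary point $x^{\star}$, item 3 of Lemma \ref{lem: vavsis} forces $x^{\star} \in \mathrm{square}(P_n)$, so $\tilde{\mathcal{A}}$ simply outputs the vertex whose square contains $x^{\star}$; this vertex equals $P_n$, the unique minimizer of $P$, and no further $P$-queries are needed. Caching past queries (so that repeated lookups of $P(v)$ count only once) only helps the bound. Summing over all of $\mathcal{A}$'s queries then yields $\mathcal{G}(\tilde{\mathcal{A}},P) \leq 5\, \mathcal{Q}(\mathcal{A},\hat{P},\epsilon)$, which rearranges to the claimed inequality.

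The only place that calls for genuine care is the simulation step, and the only thing that could possibly derail it is if reconstructing $\hat{P}$ near $x$ required information beyond the triple $(P_{i-1},P_i,P_{i+1})$. That is exactly what item 4 of Lemma \ref{lem: vavsis} rules out, so the substantive work has already been done by Vavasis' construction, and what remains here is purely bookkeeping: localization of $x$ into a grid square, a constant-sized neighborhood scan, and a tally of queries.
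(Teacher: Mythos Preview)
Your proof is correct and follows essentially the same approach as the paper: simulate $\mathcal{A}$ on $\hat{P}$ by, for each query $x\in\mathrm{square}(v)$, querying $v$ and its grid neighbors in $P$, then using items 2 and 4 of Lemma~\ref{lem: vavsis} to reconstruct $\hat{P}(x)$; termination is handled via item 3. Your version is mildly more parsimonious (you skip the neighbor queries when $P(v)>0$ and you cache), but the structure, key lemma invocations, and the factor $5$ are identical to the paper's argument.
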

\begin{proof}
	Given an algorithm $\mathcal{A}$ we explain how to construct $\tilde{\mathcal{A}}$. Fix $P \in  \mathrm{F}_{n(\epsilon)}$. If $\mathcal{A}$ queries a point $x \in \mathrm{square}\left(v\right)\subset [0,1]^2$. Then $\tilde{\mathcal{A}}$ queries $v$ and all of its neighbors. When $\mathcal{A}$ terminates it has found an $\epsilon$-stationary point. By Lemma \ref{lem: vavsis}, this point must lie in $\mathrm{square}\left(P_n\right)$. By querying $P_n$ and its neighbors, $\tilde{\mathcal{A}}$ will determine that $P_n$ is a local minimum and hence the minimum of $P$.\\
	
	Since each vertex has at most $4$ neighbors, it will now suffice to show that $\tilde{\mathcal{A}}$ can remain consistent with $\mathcal{A}$. We thus need to show that after querying the neighbors of $v$, $\tilde{\mathcal{A}}$ may deduce the value of $\hat{P}(x)$.\\
	
	As we are only interested in the number of queries made by $\tilde{\mathcal{A}}$, it is fine to assume that $\tilde{\mathcal{A}}$ has access to the construction used in Lemma \ref{lem: vavsis}. Now, suppose that $P(v) > 0$ and $v$ does not lie on the path. In this case, by Lemma \ref{lem: vavsis}, $\hat{P}(x) = \ell(x)$, which does not depend on $P$ itself and $\ell(x)$ is known. Otherwise $v = P_i$ for some $i =0,...,n$. So, after querying the neighbors of $v$, $\tilde{\mathcal{A}}$ also knows $P_{i-1}$ and $P_{i+1}$. The lemma then tells us that $\hat{P}\vert_{\mathrm{square}\left(v\right)}$ is uniquely determined and, in particular, the value of $\hat{P}(x)$ is known.
\end{proof}
\subsection{A lower bound for monotone path functions}
Denote $\mathcal{D}_{\mathrm{p}}(n)$ to be the set of all distributions supported on $\mathrm{F}_n$. By Lemma \ref{lem: reduction},
$$\mathcal{Q}_{\mathrm{rand}}(\epsilon) \geq \sup\limits_{\mathcal{D}\in\mathcal{D}_{\mathrm{p}}\left(n(\epsilon)\right) }\inf\limits_{\mathcal{A} \text{ determinstic}}\E_{P\sim\mathcal{ D}}\left[\mathcal{Q}\left(\mathcal{A},\hat{P},\epsilon\right)\right] \geq \frac{1}{5} \sup\limits_{\mathcal{D}\in\mathcal{D}_{\mathrm{p}}\left(n(\epsilon)\right) }\inf\limits_{\mathcal{A} \text{ determinstic}}\E_{P\sim\mathcal{ D}}\left[\mathcal{G}\left(\tilde{\mathcal{A}},P\right)\right].$$
In \citep{sun2009quantum}, the authors present a family of random paths $(X_\delta)_{\delta>0} \subset \mathcal{D}_{\mathrm{p}}(n)$. Using these random paths it is shown that for every $\delta > 0$,
$$\mathcal{G}_{\mathrm{rand}}(n):=\sup\limits_{\mathcal{D}\in\mathcal{D}_{\mathrm{p}}\left(n\right) }\inf\limits_{\mathcal{A} \text{ determinstic}}\E_{P\sim\mathcal{ D}}\left[\mathcal{G}\left(\mathcal{A},P\right)\right] = \Omega(n^{1-\delta}).$$
This immediately implies,
$$\mathcal{Q}_{\mathrm{rand}}(\epsilon) = \Omega\left(\left(\frac{1}{\sqrt{\epsilon}}\right)^{1-\delta}\right).$$
Their proof uses results from combinatorial number theory in order to construct a random path which, roughly speaking, has unpredictable increments. This distribution is then used in conjunction with a method developed by Aaronson (\citep{aaronson2006lower}) in order to produce a lower bound.\\

We now present a simplified proof of the result, which also slightly improves the bound. We simply observe that known results concerning unpredictable random walks, can be combined with Aaronson's method.  Theorem \ref{thm: lower bound} will then be a consequence of the following theorem:
\begin{theorem} \label{thm: discrete lower bound}
	Let the notations of Theorem \ref{thm: lower bound} prevail. Then
	$$\mathcal{G}_\mathrm{rand}(n) = \Omega\left(\frac{n}{S_h(n)}\right).$$
\end{theorem}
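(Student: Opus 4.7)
My plan is to execute the two-step strategy outlined at the end of the section: apply Yao's minimax principle to exhibit a hard distribution, then combine the unpredictable monotone paths of Benjamini--Pemantle--Peres \citep{BPP98} with Aaronson's potential argument \citep{aaronson2006lower} to show that every deterministic algorithm needs $\Omega(n / S_h(n))$ expected queries against this distribution. This is a simplification of the approach in \citep{sun2009quantum} in that it appeals directly to known unpredictability theorems rather than constructing unpredictable paths from scratch via combinatorial number theory.

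The first step is to construct the distribution $\mathcal{D} \in \mathcal{D}_\mathrm{p}(n)$. Invoking \citep{BPP98}, there is a random monotone lattice path enjoying an $h$-unpredictability property of the following form: for any adapted history $\mathcal{F}_t$ of queries and answers, any look-ahead $k \geq 1$, and any candidate vertex $v$, the conditional probability that $v$ coincides with the $k$-th path vertex beyond the currently revealed prefix is at most $C h(k)/k$. The summability hypothesis $\sum_k h(k)/k < \infty$ is exactly the regularity needed for the \citep{BPP98} construction, and truncating the walk to $n$ steps produces the desired element of $\mathrm{F}_n$.

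The second step is an Aaronson-style potential argument. For a fixed deterministic algorithm $\mathcal{A}$, let $L_t$ denote the length of the longest path prefix that has been entirely queried by time $t$, so that success at stopping time $\tau$ forces $L_\tau = n$ (since $P_n$ is the unique local minimum and recognizing it requires querying both $P_n$ and its neighbors). I would introduce a weighted potential of the form $\Phi_t = \sum_{i = L_t+1}^{n} w_i$ and analyze its expected one-step decrement. A single query $v_{t+1}$ can decrease $\Phi$ only when $v_{t+1} = P_{L_t+1}$ (a probability $\leq C h(1)$ event by unpredictability), and in that case the drop equals $w_{L_t+1} + \cdots + w_{L_t + J}$ where $J-1$ is the ``backlog'' of path vertices just beyond position $L_t+1$ that happen to have been queried in earlier rounds. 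Bounding this backlog contribution using the uniform-over-$\mathcal{F}_t$ form of unpredictability, and choosing the weights $w_i$ to balance $\Phi_0 \geq n$ against the per-step decrement, makes the harmonic factor $S_h(n) = \sum_{k=1}^n 1/(k h(k))$ appear naturally; Wald's identity $\E[\Phi_0 - \Phi_\tau] \leq \E[\tau] \cdot \max_t \E[\Phi_t - \Phi_{t+1} \mid \mathcal{F}_t]$ then yields $\E[\tau] = \Omega(n/S_h(n))$.

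The main obstacle, in my view, is the joint bookkeeping for the backlog: the event that several path vertices ahead of $P_{L_t+1}$ were queried in earlier rounds is highly correlated, and controlling its conditional probability requires leveraging the uniform-over-past-information form of \citep{BPP98}'s unpredictability rather than the weaker marginal version $\P(P_k = v)$ small. Once that conditioning is handled cleanly, the weighting that makes the bound tight should be essentially forced by the interaction of the unpredictability rate $h(k)/k$ with the potential, and the factor $S_h(n)$ in the denominator emerges from an elementary rearrangement.
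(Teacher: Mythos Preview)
Your approach differs structurally from the paper's and has two concrete gaps.

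The paper does not run a prefix-tracking potential argument. It applies Aaronson's relational adversary bound (Theorem~\ref{thm: aaronson}) as a black box: one chooses a symmetric weight $w(P,P')$ on pairs of paths with distinct endpoints, and the query complexity is at least $\min_{P,P',v} T(w,P)/T(w,P,v)$. Taking $w_h(P,P')=\P(X^h=P)\sum_{i<n}\P(X^h=P'\mid X^h[i]=P[i])$ for the unpredictable walk $X^h$ of \citep{haggstrom1998nearest}, two short computations give $T(w_h,P)\geq \P(X^h=P)\bigl(n-c_hS_h(n)\bigr)$ and $T(w_h,P,v)\leq 2c_h\,\P(X^h=P)\,S_h(n)$, so the ratio is $\Omega(n/S_h(n))$. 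The adversary formulation handles the question ``what has the algorithm actually learned about $P_n$'' automatically, which is exactly where your argument breaks.

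Your first gap: the potential $\Phi_t$ tied to $L_t$ does not control correctness. You assert that success forces $L_\tau=n$, but the deterministic algorithm that simply queries every vertex on the diagonal $\{\|v\|_1=n\}$ certifies $P_n$ in $n{+}1$ queries while $L_\tau=0$; more generally an algorithm can pin down $P_n$ by elimination, or by locating a single on-path vertex $P_j$ with $j$ close to $n$, without ever tracing $P_0,\ldots,P_{j-1}$. Hence $\E[\Phi_0-\Phi_\tau]$ need not be of order $n$ and the Wald step collapses. Repairing this would require a potential that measures genuine residual uncertainty about $P_n$, at which point you are essentially rederiving the weight-ratio quantity in Theorem~\ref{thm: aaronson}. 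Your second gap: the unpredictability inequality is inverted. The bound you need (from \citep{haggstrom1998nearest}, which is the extension of \citep{BPP98} actually used here) is $\P(X^h_m=u\mid X^h[m-k])\leq c_h/(k\,h(k))$, not $C\,h(k)/k$. Summing the correct bound over $k\le n$ is precisely what produces $S_h(n)=\sum_{k\le n}1/(k\,h(k))$; your stated bound instead sums to the convergent series $\sum_k h(k)/k$ and would falsely yield $\Omega(n)$.
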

The theorem of Aaronson, reformulated using our notations (see also \citep[Lemma 2]{sun2009quantum}), is given below.
\begin{theorem}[Theorem 5, \citep{aaronson2006lower}] \label{thm: aaronson}
	Let $w :\mathrm{F}_n \times \mathrm{F}_n \to \R^+$ be a weight function with the following properties:
	\begin{itemize}
		\item $w(P,P') = w(P',P)$.
		\item $w(P,P') = 0$, whenever $P_n = P'_n$.
	\end{itemize}
	Define 
	$$T(w,P) := \sum\limits_{Q \in \mathrm{F}_n} w(P,Q),$$ and for $v \in V_n$
	$$T(w,P,v):= \
	\sum\limits_{Q \in \mathrm{F}_n: Q(v) \neq P(v)}w(P,Q).$$
	Then
	$$\mathcal{G}_{\mathrm{rand}}(n) = \Omega\left( \min\limits_{\substack{P,P',v\\P(v)\neq P'(v), w(P,P') > 0}}\max\left(\frac{T(w,P)}{T(w,P,v)},\frac{T(w,P')}{T(w,P',v)}\right)\right).$$
\end{theorem}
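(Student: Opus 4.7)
The plan is to apply Aaronson's adversary bound (Theorem \ref{thm: aaronson}) to a weight function constructed from an \emph{unpredictable monotone random path} on the grid $G_n$. The structure of the target bound $n/S_h(n)$, where $S_h(n) = \sum_{k=1}^n \frac{1}{k\,h(k)}$, strongly hints at the following interpretation: $h(k)$ quantifies how unpredictable the $k$-th step of the adversary's random path can be made, and the harmonic weighting $1/(k\,h(k))$ balances the cost of distinguishing paths at different scales.

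First, I would construct a distribution $\mu$ on $\mathrm{F}_n$ (a monotone random path $(P_0,\ldots,P_n)$ taking unit up/right steps from the origin) such that for every $k \in \{1,\ldots,n\}$ and every vertex $v$ on the anti-diagonal $\{u : \|u\|_1 = k\}$,
\[
\P_\mu(P_k = v) \leq C\,h(k).
\]
The hypothesis $\sum_k h(k)/k < \infty$ is precisely what permits such a monotone unpredictable walk, in the spirit of the Benjamini--Pemantle--Peres framework \citep{BPP98}; this is the probabilistic ingredient that the original \citep{sun2009quantum} argument obtains via number-theoretic constructions.

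Second, I would define the weight function. Say that $P, P' \in \mathrm{F}_n$ form a \emph{$k$-branch pair} if they agree on steps $0,\ldots,k-1$, disagree on step $k$, and then follow independent continuations (drawn from $\mu$ conditional on the prefix) to (necessarily) distinct endpoints $P_n \neq P'_n$. Set
\[
w(P,P') \;=\; \sum_{k=1}^n \alpha_k \,\mu(P)\,\mu(P')\, \mathbf{1}\{P,P' \text{ is a $k$-branch pair}\},
\]
with tuning weights $\alpha_k > 0$ to be chosen later. This $w$ is symmetric and vanishes when $P_n = P'_n$, matching Aaronson's hypotheses.

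Third, I would estimate Aaronson's two quantities. For the denominator, if $v$ lies on the anti-diagonal at distance $k_0$ from the origin, then $P(v) \neq P'(v)$ forces either $P_{k_0}=v$ or $P'_{k_0}=v$; combined with the unpredictability of $\mu$, the contribution of $j$-branch pairs to $T(w,P,v)$ is at most $\alpha_j\,\mu(P)\cdot h(k_0)$ for each relevant $j$, with only branch positions $j \leq k_0$ contributing nontrivially. For the numerator, a typical path from $\mu$ admits a branch at a constant fraction of positions, so $T(w,P) \gtrsim \mu(P) \sum_{j=1}^n \alpha_j$. Setting $\alpha_k = \frac{1}{k\,h(k)}$, the numerator sums to $S_h(n)$ while the per-vertex denominator, after bookkeeping over which $j$'s contribute, is controlled by $\alpha_{k_0}h(k_0) = 1/k_0$. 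Taking $\max(T(w,P)/T(w,P,v), T(w,P')/T(w,P',v))$ and minimizing over admissible $(P,P',v)$ then yields $\Omega(n/S_h(n))$.

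The main obstacle will be keeping the accounting in step three honest: one must verify (i) that the unpredictability bound on $\mu$ survives the conditioning implicit in the branch construction (i.e., conditional on a shared prefix of length $j$, the $k$-th vertex remains spread with probability $\leq C\,h(k)$), and (ii) that a typical $\mu$-sample has branch opportunities at $\Omega(n)$ positions, so that $T(w,P)$ really scales like $\mu(P)\cdot S_h(n)$ and not like $\mu(P)\cdot (\text{fewer terms})$. Both facts should follow from the Markovian construction of the unpredictable walk, but they are the probabilistic heart of the argument; once they are in place, the rest reduces to plugging $\alpha_k = 1/(k\,h(k))$ into Aaronson's formula.
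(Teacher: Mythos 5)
The statement you were asked to prove is Aaronson's adversary theorem (\citep[Theorem 5]{aaronson2006lower}). The paper does not prove this statement---it imports it verbatim as a black box from the literature---and your proposal does not prove it either: you explicitly \emph{apply} Theorem \ref{thm: aaronson} as the final step. What you have actually sketched is a proof of the downstream result, Theorem \ref{thm: discrete lower bound} (the lower bound $\mathcal{G}_{\mathrm{rand}}(n) = \Omega(n/S_h(n))$). Proving Aaronson's theorem would require an adversary/hybrid-style argument bounding the information any deterministic algorithm extracts per query under the weight distribution, which is not touched in the proposal.

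Taking the proposal on its own terms as an attempt at the downstream lower bound, it is in the right spirit (unpredictable monotone walk plus Aaronson's bound, which is indeed the paper's route), but the accounting is off in two ways that matter. First, the unpredictability estimate you posit, $\P_\mu(P_k = v) \le C\,h(k)$, has the wrong scaling. The quantity one can actually arrange, and the one the paper uses (Theorem 1.4 of \citep{haggstrom1998nearest}), is a \emph{conditional} bound of the form $\P(X_m = u \mid X_0,\dots,X_{m-k}) \le c_h/(k\,h(k))$; note the extra factor of $1/k$, which reflects that the walk spreads across the $O(k)$-point antidiagonal. For $h(k)=1/\log^2 k$, your bound only gives $O(1/\log^2 k)$ while the true bound is $O(\log^2 k/k)$, so your version is far too weak and would not even beat a plain random walk. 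Second, and more structurally, your planned estimates---numerator $T(w,P)\gtrsim \mu(P)\sum_j\alpha_j = \mu(P)\,S_h(n)$ and denominator $T(w,P,v)\lesssim \mu(P)\,\alpha_{k_0} h(k_0)=\mu(P)/k_0$---produce a ratio of order $k_0 S_h(n)$. Since Aaronson's theorem takes the \emph{minimum} over $(P,P',v)$, you may choose $v$ with $k_0=1$, which collapses your bound to $\Omega(S_h(n))$, not $\Omega(n/S_h(n))$. The paper's estimates run the other way and are uniform in $v$: the numerator is $\ge \P(X=P)(n - c_h S_h(n)) \sim \P(X=P)\,n$, and the denominator is $\le 2\P(X=P)\,c_h S_h(n)$, giving the ratio $\approx n/(2c_h S_h(n))$ independently of where $v$ lies. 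The source of the discrepancy is the weight function: the paper sets $w_X(P,P') = \P(X=P)\sum_{i=0}^{n-1}\P(X=P'\mid X[i]=P[i])$, which builds in a $1/\P(X[i]=P[i])$ normalization that cancels the prefix probability in the denominator estimate; your $\alpha_j\,\mu(P)\mu(P')$ branch-pair weight lacks this cancellation, which is why the bookkeeping goes sideways. In short: prove Aaronson's theorem if that is what is being asked, or, if the intent was the downstream lower bound, adopt the conditional $c_h/(k\,h(k))$ unpredictability and the prefix-normalized weight function so that $S_h(n)$ lands in the denominator of the adversary ratio, not the numerator.
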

For $P \in \mathrm{F}_n$, one should think about $w$ as inducing a probability measure according to $w(P,\cdot)$.
If $Q$ is sampled according to this measure, then the quantity $\frac{T(w,P,v)}{T(w,P)}$ is the probability that $P(v) \neq Q(v)$. That is, either $v \in P$ or $v \in Q$, but not both. The theorem then says that if this probability is small, for at least one path in each pair $(P, P')$ such that $P_n \neq P'_n$, then any randomized algorithm must make as many queries as the reciprocal of the probability. \\

We now formalize this notion; For a random path $X \in \mathcal{D}_\mathrm{p}(n)$, define the following weight function:
$$w_X(P,P') = \begin{cases}
0& \text { if } P_n = P'_n\\
\P(X = P)\cdot\sum\limits_{i=0}^{n-1}\P(X = P'| X[i] = P[i])& \text{ otherwise }
\end{cases}.$$
Here $w_X(P,P')$ is proportional to the probability that $X = P'$, conditional on agreeing with $P$ on the first $i$ steps, where $i$ is uniformly chosen between $0$ and $n-1$.
Note that, for any $i$,
\begin{align*}
\P\left(X = P\right)\cdot&\P\left(X = P'| X[i] = P[i]\right) \\
&=\P(X[i] = P[i])\cdot\P(X = P|X[i] = P[i])\cdot\P(X = P'| X[i] = P[i])\\
&=\P\left(X = P'\right)\cdot\P\left(X = P| X[i] = P'[i]\right).
\end{align*}
Hence, $w_X(P, P') = w_X(P', P)$.
We will use the following theorem from \cite{haggstrom1998nearest}, which generalizes the main result of \citep{BPP98}.
\begin{theorem}[Theorem 1.4, \citep{haggstrom1998nearest}]
	Let $h$ be as in Theorem \ref{thm: lower bound},
	Then there exists a random path $X^h \in \mathcal{D}_\mathrm{p}(n)$ and a constant $c_h>0$, such that for all $m\geq k$, and for every $(v_0,v_1...,v_{m-k})$, sequence of vertices,
	\begin{equation} \label{eq: unpredictablity}
	\sup\limits_{\|u\|_1 = m}\P\left(X^h_m = u|X^h_0 = v_0,...,X^h_{m-k} = v_{m-k}\right) \leq \frac{c_h}{kf(k)}.
	\end{equation}
\end{theorem}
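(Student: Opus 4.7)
My plan is to construct the random path $X^h$ through a multi-scale (hierarchical) randomization of its increments, and then verify \eqref{eq: unpredictablity} by exhibiting a "free" macroscopic bias shift at scale $k$ that spreads the conditional distribution.

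First I would reduce to a one-dimensional anti-concentration statement. A monotone path in $G_n$ is in bijection with an increment sequence $(b_i)_{i=1}^n \in \{0,1\}^n$, via $X^h_m = (m - S_m, S_m)$ with $S_m = \sum_{i\le m} b_i$. Conditioning on $X^h_0, \dots, X^h_{m-k}$ is equivalent to conditioning on $(b_1,\dots,b_{m-k})$, and fixing $u$ with $\|u\|_1 = m$ fixes $S_m$. So \eqref{eq: unpredictablity} reduces to showing
\[
\sup_s \P\bigl(S_m - S_{m-k} = s \,\big|\, b_1,\dots,b_{m-k}\bigr) \leq \frac{c_h}{k\,h(k)}.
\]

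Next, I would build the increments by a hierarchical construction. Let $(\epsilon_{j,\ell})_{j\geq 0, \ell \geq 0}$ be i.i.d.\ uniform $\pm 1$ random variables and $(U_i)_{i\geq 1}$ i.i.d.\ uniform on $[0,1]$, all independent. Set a scale-$j$ amplitude $\alpha_j \asymp h(2^j)$ and define a position-dependent bias
\[
p_i = \frac{1}{2} + \sum_{j\geq 0} \alpha_j\, \epsilon_{j,\lfloor i/2^j\rfloor},
\qquad b_i = \mathbf{1}\{U_i \leq p_i\}.
\]
The requirement $p_i \in [0,1]$ amounts to $\sum_j \alpha_j \leq 1/2$, which is exactly the assumption $\sum_{k\geq 1} h(k)/k < \infty$ after the dyadic change of variables $k \leftrightarrow 2^j$. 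In this way the path's bias is constant on every dyadic block of size $2^j$, with an independent random perturbation of amplitude $\alpha_j$ added at each scale.

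Finally, I would verify the anti-concentration bound. Fix $m\geq k$ and set $j^* = \lfloor \log_2 k \rfloor$. Among the dyadic blocks at scale $j^*$ that intersect $(m-k, m]$, at least one block, call it $\ell^*$, lies entirely inside this window; the variable $\epsilon_{j^*, \ell^*}$ is independent of $(b_1,\dots,b_{m-k})$ and, given the other scales and the $U_i$'s, its value flips $S_m - S_{m-k}$ by a deterministic amount of order $2^{j^*}\alpha_{j^*} \asymp k\,h(k)$. Summing over finer scales and using the independence of the $\epsilon_{j,\ell}$ variables produces, conditionally on the past, a mixture whose support is spread over $\Theta(k\,h(k))$ integers with bounded density. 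A clean way to turn this into a uniform-in-$s$ bound is a Fourier/characteristic-function estimate: conditional on the past, the characteristic function of $S_m - S_{m-k}$ decays like the product of the individual scales' characteristic functions, and scale $j^*$ alone contributes a factor that forces the density to be at most $c_h/(k\,h(k))$.

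The main obstacle will be the bookkeeping of partial blocks and of fine scales: one has to show that (a) conditioning on $(b_1,\dots,b_{m-k})$ does not accidentally determine enough of the $\epsilon_{j,\ell}$'s to kill the free shift at scale $j^*$, and (b) the cumulative effect of the many finer scales $j < j^*$ does not concentrate the conditional distribution back onto a set smaller than $k\,h(k)$. Both points are handled by the monotonicity of $h$ and the summability $\sum_k h(k)/k < \infty$, which together control the total variance contributed by scales $j < j^*$ against the macroscopic shift of magnitude $k\,h(k)$ coming from scale $j^*$.
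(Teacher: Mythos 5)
The paper does not prove this statement: it is quoted (up to renaming the reference's $f$ to $h$; note the displayed bound still reads $f(k)$, a carried-over typo for $h(k)$) as Theorem~1.4 of H\"aggstr\"om and Mossel (1998), which generalizes the unpredictable walk of Benjamini, Pemantle and Peres. There is therefore no ``paper's own proof'' to compare against, and what follows assesses your sketch as a reconstruction of the cited result.

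Your reduction to a one-dimensional anti-concentration statement is correct, and the hierarchical construction --- independent $\pm1$ signs on dyadic blocks at every scale, with amplitudes $\alpha_j\asymp h(2^j)$ summable precisely because $\sum_k h(k)/k<\infty$ --- is very much in the spirit of the BPP/H\"aggstr\"om--Mossel constructions. The key structural observation, that a dyadic block contained in $(m-k,m]$ carries a sign variable independent of the conditioning $(b_1,\dots,b_{m-k})$, is also the right one.

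There are, however, genuine gaps in the anti-concentration step. A minor one: a window of length $k$ need not contain a full block of length $2^{\lfloor\log_2 k\rfloor}$; you must step down a scale and take $2^{j^*+1}\le k$, which is harmless but must be said. The more serious one is the final claim that ``scale $j^*$ alone contributes a factor that forces the density to be at most $c_h/(k\,h(k))$.'' That is false. Conditional on the past and on all other randomness, flipping the single sign $\epsilon_{j^*,\ell^*}$ yields only a \emph{two-point} conditional mixture of $S_m-S_{m-k}$; on the Fourier side this contributes a factor $|\cos(\theta D)|$ whose integral $\frac{1}{2\pi}\int_{-\pi}^{\pi}|\cos(\theta D)|\,d\theta = 2/\pi$ is a constant independent of $D$, and gives no decay whatsoever. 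To reach the bound $c_h/(k\,h(k))$ you must aggregate \emph{all} the scales $j\le j^*$ whose blocks sit inside the window, argue that the conditional shift magnitudes $D_j\asymp 2^j h(2^j)$ grow roughly geometrically so that $\sum_j D_j\epsilon_j$ is (approximately) dissociated and spread over $\sim\sum_j D_j\asymp k\,h(k)$ integers, and then run an Esseen-type bound on the full product $\prod_j|\cos(\theta D_j)|$ (or a local CLT). Your earlier sentence about ``summing over finer scales'' states the right conclusion, but the justification you then give does not deliver it, and the multiscale variance/dissociation bookkeeping that you relegate to an ``obstacle'' at the end is in fact the heart of the proof rather than a side technicality. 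One also has to verify, as you note, that conditioning on $(b_1,\dots,b_{m-k})$ does not deterministically fix too many of the in-window signs; since those signs govern blocks disjoint from $\{1,\dots,m-k\}$, this is fine, but the effective $D_j$ after conditioning on the $U_i$'s is random and can vanish, so the argument must be done in expectation (or one must condition more carefully), another point the sketch glosses over.
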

For $X^h$ as in the theorem abbreviate $w_h := w_{X^h}$ and recall $S_h(n) : =  \sum\limits_{k=1}^{n}\frac{1}{k\cdot h(k)}$. We now prove the main quantitative estimates which apply to $w_h$.
\begin{lemma}
	For any $P \in \mathrm{F}_n$, 
	$$\sum\limits_{Q \in \mathrm{F}_n}w_h(P,Q) \geq \P(X^h = P)\cdot\left(n - c_hS_h(n)\right).$$
\end{lemma}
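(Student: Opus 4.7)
The plan is to expand $\sum_{Q}w_h(P,Q)$ by its definition, swap the two summations (over $i$ and over $Q$), and for each inner sum identify a conditional probability whose complement we can bound via the unpredictability estimate \eqref{eq: unpredictablity}.

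More concretely, first I would use the definition of $w_h$ and the fact that $w_h(P,Q)=0$ whenever $Q_n = P_n$ to write
\begin{align*}
\sum_{Q\in\mathrm{F}_n} w_h(P,Q)
&= \P(X^h=P)\sum_{i=0}^{n-1}\,\sum_{Q\in\mathrm{F}_n:\,Q_n\neq P_n}\P\bigl(X^h=Q\mid X^h[i]=P[i]\bigr).
\end{align*}
Since $\sum_{Q\in\mathrm{F}_n}\P(X^h=Q\mid X^h[i]=P[i])=1$, the inner sum equals
\[
1 - \sum_{Q:\,Q_n=P_n}\P\bigl(X^h=Q\mid X^h[i]=P[i]\bigr)
\;=\; 1 - \P\bigl(X^h_n = P_n \mid X^h[i]=P[i]\bigr),
\]
where the equality uses that the event $\{X^h_n=P_n\}$ is the disjoint union of $\{X^h=Q\}$ over monotone paths $Q$ terminating at $P_n$.

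Next, I would bound the conditional probability $\P(X^h_n=P_n\mid X^h[i]=P[i])$ from above by applying the unpredictability estimate with $m=n$, $k=n-i$, and $(v_0,\dots,v_{m-k})=(P_0,\dots,P_i)$. This is legitimate since $P_n$ satisfies $\|P_n\|_1=n$ by monotonicity of the path, and as $i$ ranges from $0$ to $n-1$, the index $k=n-i$ ranges over $\{1,\dots,n\}$, so the estimate \eqref{eq: unpredictablity} yields
\[
\P\bigl(X^h_n=P_n \mid X^h[i]=P[i]\bigr) \;\leq\; \frac{c_h}{(n-i)\,h(n-i)}.
\]

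Finally, assembling the pieces and reindexing the sum via $k=n-i$,
\[
\sum_{Q\in\mathrm{F}_n}w_h(P,Q) \;\geq\; \P(X^h=P)\sum_{i=0}^{n-1}\left(1-\frac{c_h}{(n-i)h(n-i)}\right)
\;=\; \P(X^h=P)\left(n - c_h\sum_{k=1}^{n}\frac{1}{k\,h(k)}\right),
\]
which is exactly the claimed bound by the definition of $S_h(n)$. There is no real obstacle here: the only subtlety is making sure the conditioning event in the unpredictability bound lines up with $\{X^h[i]=P[i]\}$ and that the index $k=n-i$ covered by the sum sits in the valid range $k\geq 1$, which it does since $i\leq n-1$.
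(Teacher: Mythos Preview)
Your proof is correct and follows essentially the same route as the paper: expand the definition of $w_h$, swap the sums, rewrite the inner sum over $Q$ with $Q_n\neq P_n$ as $1-\P(X^h_n=P_n\mid X^h[i]=P[i])$, apply the unpredictability bound \eqref{eq: unpredictablity} with $k=n-i$, and reindex to obtain $c_hS_h(n)$. The only cosmetic difference is that you pass to the conditional probability $\P(X^h_n=P_n\mid\cdot)$ in one step, whereas the paper keeps the sum over $Q$ with $Q_n=P_n$ a bit longer before making the same identification.
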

\begin{proof}
	We write
	\begin{align*}
	\sum\limits_{Q \in \mathrm{F}_n}w_h(P,Q) &= \P(X^h = P)\sum\limits_{Q: Q_n \neq P_n}\sum\limits_{i=0}^{n-1}\P\left(X^h = Q| X^h[i] = P[i]\right)\\
	&=\P(X^h = P)\sum\limits_{i=0}^{n-1}\left(1 - \sum\limits_{Q: Q_n = P_n}\P\left(X^h = Q| X^h[i] = P[i]\right)\right)\\
	&=\P(X^h = P)\left(n - \sum\limits_{i=0}^{n-1}\sum\limits_{Q: Q_n = P_n}\P\left(X^h = Q| X^h[i] = P[i]\right)\right).
	\end{align*}
	Using \eqref{eq: unpredictablity}, we get
	$$\sum\limits_{Q: Q_n = P_n}\P\left(X^h = Q| X^h[i] = P[i]\right) \leq \P(X^h_n = P_n|X^h[i] = P[i])\leq \frac{c_h}{(n-i)\cdot h(n-i)},$$
	and 
	$$\sum\limits_{i=0}^{n-1}\sum\limits_{Q: Q_n = P_n}\P\left(X^h = Q| X^h[i] = P[i]\right) \leq \sum\limits_{k=1}^{n}\frac{c_h}{k\cdot h(k)} = c_hS_h(n).$$
\end{proof}
\begin{lemma}
	Let $P \in \mathrm{F}_n$ and $v \in V_n$ such that $\|v\|_1 = \ell$ and $P_{\ell} \neq v$. Then,
	$$\sum\limits_{\substack{Q\in \mathrm{F}_n\\ Q_\ell =v}}w_h(P,Q) \leq 2\P(X^h = P)c_hS_h(n).$$
\end{lemma}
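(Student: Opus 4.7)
The plan is to unfold the definition of $w_h$ and swap the order of summation. Writing out
\[
\sum_{\substack{Q\in\mathrm{F}_n \\ Q_\ell = v}} w_h(P,Q) \;=\; \P(X^h = P)\sum_{i=0}^{n-1}\sum_{\substack{Q\in\mathrm{F}_n \\ Q_\ell = v,\; Q_n\neq P_n}} \P\bigl(X^h = Q \,\big|\, X^h[i] = P[i]\bigr),
\]
the key reduction is that, for each fixed $i$, the inner sum is upper bounded by $\P\bigl(X^h_\ell = v \mid X^h[i] = P[i]\bigr)$ (just drop the condition $Q_n\neq P_n$ and group the $Q$'s by their value at step $\ell$). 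So the whole task reduces to bounding the one-point conditional marginal.

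Next I would split on whether $i < \ell$ or $i \geq \ell$. If $i \geq \ell$, then $X^h[i] = P[i]$ already pins down $X^h_\ell = P_\ell \neq v$, so the conditional probability is $0$ and this range contributes nothing. If $i < \ell$, I apply the unpredictability estimate \eqref{eq: unpredictablity} with $m = \ell$ and $k = \ell - i$, taking the sequence $(v_0,\ldots,v_{m-k}) = (P_0,\ldots,P_i)$, to conclude
\[
\P\bigl(X^h_\ell = v \,\big|\, X^h[i] = P[i]\bigr) \;\leq\; \frac{c_h}{(\ell-i)\,h(\ell-i)}.
\]

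Summing over $i = 0,\ldots,\ell-1$ and reindexing by $k = \ell - i$ gives
\[
\sum_{i=0}^{n-1}\P\bigl(X^h_\ell = v \,\big|\, X^h[i] = P[i]\bigr) \;\leq\; \sum_{k=1}^{\ell}\frac{c_h}{k\,h(k)} \;=\; c_h\, S_h(\ell) \;\leq\; c_h\, S_h(n),
\]
which already yields the claimed bound (the factor of $2$ in the statement is slack). The only step that requires any care is the case split $i < \ell$ vs.\ $i \geq \ell$, since the unpredictability bound needs $m \geq k$ (equivalently $i \geq 0$), and one must verify that restricting the prefix to length $i+1 = m-k+1$ matches the hypotheses of \eqref{eq: unpredictablity} — this is exactly how the estimate is applied, and no other obstacle arises.
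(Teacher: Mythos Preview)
Your proof is correct and follows the same overall strategy as the paper: unfold the definition of $w_h$, discard the range $i \geq \ell$ since conditioning on $X^h[i]=P[i]$ forces $X^h_\ell = P_\ell \neq v$, and then bound the conditional marginal via the unpredictability estimate \eqref{eq: unpredictablity}.

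The one genuine difference is where the unpredictability bound is applied. You invoke \eqref{eq: unpredictablity} directly at step $m=\ell$ with $k=\ell-i$, obtaining $\P(X^h_\ell = v \mid X^h[i]=P[i]) \leq c_h/((\ell-i)h(\ell-i))$ and hence the total bound $c_h S_h(\ell) \leq c_h S_h(n)$, so the factor $2$ is indeed slack. The paper instead passes from $Q_\ell = v$ to $Q_{\ell+1}\in\{v+(0,1),\,v+(1,0)\}$ and applies \eqref{eq: unpredictablity} at step $m=\ell+1$, incurring the factor $2$ from the union bound over the two successors. Your route is slightly more direct and yields the sharper constant; the paper's detour through $\ell+1$ is unnecessary but harmless for the purposes of the argument.
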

\begin{proof}
	\begin{align*}
	\sum\limits_{\substack{Q\in \mathrm{F}_n\\ Q_\ell =v}}w_h(P,Q) &=\P(X^h = P)\sum\limits_{i=0}^{\ell - 1}\sum\limits_{\substack{Q: Q_n \neq P_n\\Q_\ell = v}}\P\left(X^h = Q| X^h[i] = P[i]\right)\\
	&\leq\P(X^h = P)\sum\limits_{i=0}^{\ell - 1}\sum\limits_{\substack{Q: Q_\ell = v
	}}\P\left(X^h = Q| X^h[i] = P[i]\right).
	\end{align*}
	Observe that if $Q_\ell = v$, then $Q_{\ell+1}$ must equal $v + (0,1)$ or $v + (1,0)$.
	In particular, for $i < \ell$, \eqref{eq: unpredictablity} shows
	\begin{align*}
	\sum\limits_{\substack{Q: Q_{\ell} = v
	}}\P&\left(X^h =Q| X^h[i] = P[i]\right) \\
	&\leq \P\left(X^h_{\ell+1} = v + (0,1) \text{ or }X^h_{\ell+1} = v + (1,0)|X^h[i] = P[i]\right)\\
	&\leq \P\left(X^h_{\ell+1} = v + (0,1)|X^h[i] = P[i]\right) + \P\left(X^h_{\ell+1} = v + (1,0)|X^h[i] = P[i]\right)\\
	&\leq \frac{2c_h}{(\ell+1 - i)\cdot h(\ell+1 - i)}.
	\end{align*}
	So,
	\begin{align*}
	\sum\limits_{i=0}^{\ell - 1}\sum\limits_{\substack{Q: Q_\ell = v
	}}\P\left(X^h =Q| X^h[i] = P[i]\right) &\leq  \sum\limits_{i=0}^{\ell - 1}\frac{2c_h}{(\ell+1 - i)\cdot h(\ell+1 - i)}\\
	\leq 2c_hS_h(n).
	\end{align*}
\end{proof}

We are now in a position to prove Theorem \ref{thm: discrete lower bound}.

\begin{proof}[of Theorem \ref{thm: discrete lower bound}]
	Let $P \in \mathrm{F}_n$ and let $v \in V_n$, with 
	$$\|v\|_1 = \ell \text{ and } P_\ell \neq v.$$
	 Note that $P(v) = \ell$. So, if $Q \in \mathrm{F}_n$ is such that $Q(v) \neq P(v)$, then necessarily $Q_{\ell} = v$. We now set $P' \in \mathrm{F}_n$, with $P(v) \neq P'(v)$. In this case, the previous two lemmas show
	 \begin{align*}
	\max&\left(\frac{T(w_h,P)}{T(w_h,P,v)},\frac{T(w_h,P')}{T(w_h,P',v)}\right) \\
	&\geq \frac{T(w_h,P)}{T(w_h,P,v)} 
	= \frac{\sum\limits_{Q\in \mathrm{F}_n}w_h(P,Q)}{\sum\limits_{\substack{Q\in \mathrm{F}_n\\Q(v) \neq  P(v)}}w_h(P,Q)} = \frac{\sum\limits_{Q\in \mathrm{F}_n}w_h(P,Q)}{\sum\limits_{\substack{Q\in \mathrm{F}_n\\Q_{\ell}=v}}w_h(P,Q)}\geq \frac{n - c_hS_h(n)}{2c_hS_h(n)}.
	\end{align*}
	Since we are trying to establish a lower bound, we might as well assume that $S_h(n) = o(n)$. So, for $n$ large enough
	$$\frac{n - c_hS_h(n)}{2c_hS_h(n)} \geq\frac{n}{4c_hS_h(n)}.$$
	Plugging this estimate into Theorem \ref{thm: aaronson} yields the desired result
\end{proof}

\subsection{Heuristic extension to higher dimensions} \label{sec:heuristic}
In this section we propose a heuristic approach to extend the lower bound to higher dimensions. In the $2$ dimensional case, the proof method of Section \ref{sec:LB} consisted of two steps: first reduces the problem to the discrete setting of monotone paths in $[n]^2$, and then analyze the query complexity of finding the minimal point for such path functions. Thus, to extend the result we should consider path functions on the $d$-dimensional grid, as well as a way to build smooth functions on $[0,1]^d$ from those paths.

The lower bound for finding minimal points of path functions in high-dimensional grids was obtained in \citep{Zha06}, where it was shown that, in the worst case, any randomized algorithm must make $\Omega\left(n^{\frac{d}{2}}\right)$ queries in order to find the end point of a path defined over $[n]^{d}$. Thus, if we can find a discretization scheme, analogous to Lemma \ref{lem: vavsis}, in higher dimensions, we could obtain a lower bound for finding $\epsilon$-stationary points. What are the constraints on such a discretization?

First note that necessarily the construction of \citep{Zha06} must be based on paths of lengths $\Omega\left(n^{\frac{d}{2}}\right)$, for otherwise one could simply trace the path to find its endpoint. In particular, since each cube has edge length $\frac{1}{n}$, an analogous construction to Lemma \ref{lem: vavsis} will reach value smaller than $- \epsilon \cdot n^{\frac{d}{2}-1}$ at the stationary point (i.e., the endpoint of the path). On the other hand, in at least one of the neighboring cubes (which are at distance less than $1/n$ from the stationary point), the background linear function should prevail, meaning that the function should reach a positive value. Since around the stationary point the function is quadratic, we get the constraint:
\[
- \epsilon \cdot n^{\frac{d}{2}-1} + \left(\frac{1}{n}\right)^2 > 0 \Leftrightarrow n < \left( \frac{1}{\epsilon} \right)^{\frac{2}{d+2}} \,.
\]
In particular the lower bound $\Omega\left(n^{\frac{d}{2}}\right)$ now suggests that for finding stationary point one has the complexity lower bound $\left( \frac{1}{\epsilon} \right)^{\frac{d}{d+2}}$.

\section{Discussion} \label{sec:open}
In this paper we introduced a near-optimal algorithm for finding $\epsilon$-stationary points in dimension $2$. Finding a near-optimal algorithm in dimensions $d \geq 3$ remains open. Specific challenges include:
\begin{enumerate}
\item Finding a strategy in dimension $3$ which improves upon GFT's $\tilde{O}(1/\epsilon)$ complexity.
\item The heuristic extension of the lower bound in Section \ref{sec:heuristic} suggests $\Omega\left(\frac{1}{\epsilon^{\frac{d}{d+2}}}\right)$ as a complexity lower bound for any dimension $d$ (note in particular that the exponent tends to $1$ as $d$ tends to infinity). On the other hand, \cite{CDHS19} proved that for $d = \Omega(1/\epsilon^2)$, one has the complexity lower bound $\Omega(1/\epsilon^2)$. How do we reconcile these two results?  Specifically we raise the following question: Is there an algorithm with complexity $C_d / \epsilon$ for some constant $C_d$ which depends only on $d$? (Note that $C_d$ as small as $O(\sqrt{d})$ would remain consistent with \cite{CDHS19}.) Alternatively we might ask whether the \cite{CDHS19} lower bound holds for much smaller dimensions, e.g. when $d = \Theta(\log(1/\epsilon))$, are we in the $1/\epsilon$ regime as suggested by the heuristic, or are we already in the high-dimensional $1/\epsilon^2$ of \cite{CDHS19}?
\item Especially intriguing is the limit of low-depth algorithms, say as defined by having depth smaller than $\mathrm{poly}(d \log(1/\epsilon))$. Currently this class of algorithms suffers from the curse of dimensionality, as GFT's total work degrades significantly when the dimension increases (recall from Theorem \ref{thm:highdim} that it is $\tilde{O}\left(\frac{1}{\epsilon^{\frac{d-1}{2}}}\right)$). Is this necessary? A much weaker question is to simply show a separation between low-depth and high-depth algorithms. Namely can one show a lower bound $\Omega(1/\epsilon^c)$ with $c>2$ for low-depth algorithms? We note that lower bounds on depth have been investigated in the convex setting, see \citep{Nem94}, \citep{BJLLS19}.
\item A technically challenging problem is to adapt the construction in [Section 3, \cite{Vav93}] to non-monotone paths in higher dimensions. In particular, to formalize the heuristic argument from Section \ref{sec:heuristic}, such construction should presumably avoid creating saddle points.
\end{enumerate}
Many more questions remain open on how to exploit the low-dimensional geometry of smooth gradient fields, and the above four questions are only a subset of the fundamental questions that we would like to answer. Other interesting questions include closing the logarithmic gap in dimension $2$, or understanding better the role of randomness for this problem (note that GFT is deterministic, but other type of strategies include randomness, such as Hinder's non-convex cutting plane \citep{Hin18}).

\subsubsection*{Acknowledgment}
We thank Ronen Eldan, Yin Tat Lee, Yuanzhi Li and Mark Selke for many helpful discussions on this problem.

\bibliographystyle{plainnat}
\bibliography{bib}

\end{document}